\newenvironment{customthm}[1]
  {\innercustomthm}
  {\endinnercustomthm}
\newtheorem*{thm*}{Theorem}
\newtheorem{thm}{Theorem}
\newtheorem{lem}[thm]{Lemma}
\newtheorem{ques}[thm]{Question}
\newcommand{\N}{\mathbb{N}}
\newcommand\numberthis{\addtocounter{equation}{1}\tag{\theequation}}
\begin{document}

\title{On Polynomial Representations of Dual DP Color Functions}

\author{Jeffrey A. Mudrock$^1$ and Gabriel Sharbel$^2$}

\footnotetext[1]{Department of Mathematics and Statistics, University of South Alabama, Mobile, AL 36688.  E-mail:  {\tt {mudrock@southalabama.edu}}}
\footnotetext[2]{School of Computing, University of South Alabama, Mobile, AL 36688.  E-mail:  {\tt {gss2121@jagmail.southalabama.edu}}}

\maketitle

\begin{abstract}

DP-coloring (also called correspondence coloring) is a generalization of list coloring that was introduced by Dvo\v{r}\'{a}k and Postle in 2015. The chromatic polynomial of a graph is an important notion in algebraic combinatorics that was introduced by Birkhoff in 1912; denoted $P(G,m)$, it equals the number of proper $m$-colorings of graph $G$. Counting function analogues of chromatic polynomials have been introduced for list colorings:  $P_{\ell}$, list color functions (1990); DP colorings: $P_{DP}$, DP color functions (2019), and $P^*_{DP}$, dual DP color functions (2021). For any graph $G$ and $m \in \N$, $P_{DP}(G, m) \leq P_\ell(G,m) \leq P(G,m) \leq P_{DP}^*(G,m)$.  In 2022 (improving on older results) Dong and Zhang showed that for any graph $G$, $P_{\ell}(G,m)=P(G,m)$ whenever $m \geq |E(G)|-1$.  Consequently, the list color function of a graph is a polynomial for sufficiently large $m$.  One of the most important and longstanding open questions on DP color functions asks: for every graph $G$ is there an $N \in \N$ and a polynomial $p(m)$ such that $P_{DP}(G,m) = p(m)$ whenever $m \geq N$?  We show that the answer to the analogue of this question for dual DP color functions is no.  Our proof reveals a connection between a dual DP color function and the balanced chromatic polynomial of a signed graph introduced by Zaslavsky in 1982.  

\medskip

\noindent {\bf Keywords.}  correspondence coloring, DP-coloring, DP color function, dual DP color function

\noindent \textbf{Mathematics Subject Classification.} 05C15, 05C30

\end{abstract}

\section{Introduction}\label{intro}

In this paper all graphs are nonempty, finite, simple graphs unless otherwise noted.  Generally speaking we follow West~\cite{W01} for terminology and notation.  The set of natural numbers is $\N = \{1,2,3, \ldots \}$.  For $m \in \N$, we write $[m]$ for the set $\{1, \ldots, m \}$.  If $G$ is a graph and $S, U \subseteq V(G)$, we use $G[S]$ for the subgraph of $G$ induced by $S$, and we use $E_G(S, U)$ for the set consisting of all the edges in $E(G)$ that have one endpoint in $S$ and the other in $U$.  If an edge in $E(G)$ connects the vertices $u$ and $v$, the edge can be represented by $uv$ or $vu$.  

\subsection{List Coloring and DP-Coloring} \label{basic}

In the classical vertex coloring problem we wish to color the vertices of a graph $G$ with up to $m$ colors from $[m]$ so that adjacent vertices receive different colors, a so-called \emph{proper $m$-coloring}. The chromatic number of a graph $G$, denoted $\chi(G)$, is the smallest $m$ such that $G$ has a proper $m$-coloring.  List coloring was introduced independently by Vizing~\cite{V76} and Erd\H{o}s, Rubin, and Taylor~\cite{ET79} in the 1970s.  For list coloring, we associate a \emph{list assignment} $L$ with a graph $G$ which assigns to each $v \in V(G)$ a list of colors $L(v)$.  Then, $G$ is \emph{$L$-colorable} if there exists a proper coloring $f$ of $G$ such that $f(v) \in L(v)$ for each $v \in V(G)$ (we refer to $f$ as a \emph{proper $L$-coloring} of $G$).  A list assignment $L$ is called a \emph{$k$-assignment} for $G$ if $|L(v)|=k$ for each $v \in V(G)$.  The \emph{list chromatic number} of a graph $G$, denoted $\chi_\ell(G)$, is the smallest $k$ such that $G$ is $L$-colorable whenever $L$ is a $k$-assignment for $G$.  Since $G$ must be $L$-colorable when $L$ is a $\chi_\ell(G)$-assignment for $G$ that assigns the same list of colors to each element in $V(G)$, it is clear that $\chi(G) \leq \chi_\ell(G)$.  

In 2015, Dvo\v{r}\'{a}k and Postle~\cite{DP15} introduced a generalization of list coloring called DP-coloring (they called it correspondence coloring) in order to prove that every planar graph without cycles of lengths 4 to 8 has list chromatic number at most 3. DP-coloring has been extensively studied over the past 9 years (see e.g.,~\cite{BH21, B16, BK182, BK17, BK18, KM19, KM20, KO18, LLYY19, M18}). Intuitively, DP-coloring is a variation on list coloring where each vertex in the graph still gets a list of colors, but identification of which colors are the same can vary from edge to edge.  Formally, for a graph $G$, a \emph{DP-cover} (or simply a \emph{cover}) of $G$ is an ordered pair $\mathcal{H}=(L,H)$, where $H$ is a graph and $L:V(G)\to 2^{V(H)}$ is a function satisfying the following conditions: 
    \begin{itemize}
		\item $\{L(v) : v \in V(G)\}$ is a partition of $V(H)$ into $|V(G)|$ parts, 

		\item for every pair of adjacent vertices $u$, $v\in V(G)$, the edges in $E_H\left(L(u),L(v)\right)$ form a matching (possibly empty), and

		\item $\displaystyle E(H) = \bigcup_{uv \in E(G)} E_{H}(L(u),L(v)).$
    \end{itemize}
    
Suppose $\mathcal{H}=(L,H)$ is a cover of a graph $G$.  A \emph{transversal} of $\mathcal{H}$ is a set of vertices $T\subseteq V(H)$ containing exactly one vertex from $L(v)$ for each $v \in V(G)$. A transversal $T$ is said to be \emph{independent} if $T$ is an independent set in $H$.  If $\mathcal{H}$ has an independent transversal $T$, then $T$ is said to be a \emph{proper $\mathcal{H}$-coloring} of $G$, and $G$ is said to be \emph{$\mathcal{H}$-colorable}.  An \emph{$m$-fold cover} of $G$ is a cover $\mathcal{H}=(L,H)$ such that $|L(v)|=m$ for all $v\in V(G)$.  An $m$-fold cover $\mathcal{H} = (L,H)$ of a graph $G$ is called \emph{full} if for each $uv \in E(G)$, $|E_{H}(L(u),L(v))| = m$.  The \emph{DP-chromatic number} of a graph $G$, denoted $\chi_{DP}(G)$, is the smallest $m$ such that $G$ is $\mathcal{H}$-colorable whenever $\mathcal{H}$ is an $m$-fold cover of $G$.

Given a cover $\mathcal{H} = (L,H)$ of $G$ and subgraph $G'$ of $G$, the \emph{subcover of $\mathcal{H}$ corresponding to $G'$} is $\mathcal{H}'=(L',H')$ where $L'$ is the restriction of $L$ to $V(G')$ and $H'$ is the subgraph of $H$ with vertex set $V(H')=\bigcup_{u\in V(G')}L(u)$ that retains those and only those edges of $H$ that belong to the matchings corresponding to the edges of $G'$.

Suppose $\mathcal{H} = (L,H)$ is an $m$-fold cover of $G$.  We say that $\mathcal{H}$ has a \emph{canonical labeling} if it is possible to name the vertices of $H$ so that $L(u) = \{ (u,j) : j \in [m] \}$ and $(u,j)(v,j) \in E(H)$ for each $j \in [m]$ whenever $uv \in E(G)$.  Clearly, when $\mathcal{H}$ has a canonical labeling, $G$ has an $\mathcal{H}$-coloring if and only if $G$ has a proper $m$-coloring.  Also, given an $m$-assignment, $L$, for a graph $G$, it is easy to construct an $m$-fold cover $\mathcal{H}'$ of $G$ such that $G$ has an $\mathcal{H}'$-coloring if and only if $G$ has a proper $L$-coloring (see~\cite{BK17}).  It follows that $\chi(G) \leq \chi_\ell(G) \leq \chi_{DP}(G)$.  
 
\subsection{Counting Proper Colorings, List Colorings, and DP-Colorings}

In 1912 Birkhoff introduced the notion of the chromatic polynomial of a graph in hopes of using it to make progress on the four color problem.  For $m \in \N$, the \emph{chromatic polynomial} of a graph $G$, $P(G,m)$, is the number of proper $m$-colorings of $G$.  It can be shown that $P(G,m)$ is a polynomial in $m$ of degree $|V(G)|$ (see~\cite{B12}).  For example, for any $n \in \N$, $P(K_n,m) = \prod_{i=0}^{n-1} (m-i)$.

The notion of chromatic polynomial was extended to list coloring in the 1990s~\cite{AS90}.   In particular, if $L$ is a list assignment for $G$, we use $P(G,L)$ to denote the number of proper $L$-colorings of $G$. The \emph{list color function} of $G$, denoted $P_\ell(G,m)$, is the minimum value of $P(G,L)$ where the minimum is taken over all possible $m$-assignments $L$ for $G$.  It is clear that $P_\ell(G,m) \leq P(G,m)$ for each $m \in \N$ since we must consider an $m$-assignment that assigns the same $m$ colors to all the vertices in $G$ when considering all possible $m$-assignments for $G$.  In general, the list color function can differ significantly from the chromatic polynomial for small values of $m$.  However, for large values of $m$, Dong and Zang~\cite{DZ22} (improving upon results in~\cite{D92}, \cite{T09}, and~\cite{WQ17}) showed the following.

\begin{thm} [\cite{DZ22}] \label{thm: DZ22}
For any graph $G$, $P_{\ell}(G,m)=P(G,m)$ whenever $m \geq |E(G)|-1$.
\end{thm}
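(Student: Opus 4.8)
The plan is to reduce the equality to a one-sided inequality and then attack it by sieving. Since the excerpt already records $P_\ell(G,m) \le P(G,m)$, it suffices to show $P(G,L) \ge P(G,m)$ for every $m$-assignment $L$ of $G$ whenever $m \ge |E(G)|-1$. First I would write both quantities via inclusion--exclusion over the edges, forcing chosen edges to be monochromatic: for $F \subseteq E(G)$ let $c(F)$ denote the number of components of the spanning subgraph $(V(G),F)$, and for each component $C$ set $a_C = \bigl|\bigcap_{v\in C} L(v)\bigr|$. A coloring is monochromatic on every edge of $F$ exactly when each component is constant with a color common to all its lists, so
\[
P(G,L) = \sum_{F\subseteq E(G)} (-1)^{|F|}\prod_{C}a_C, \qquad P(G,m) = \sum_{F\subseteq E(G)} (-1)^{|F|} m^{c(F)} .
\]
Hence $P(G,L) - P(G,m) = \sum_{F}(-1)^{|F|}\bigl(\prod_C a_C - m^{c(F)}\bigr)$, and the whole problem becomes the nonnegativity of this signed sum.

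Next I would try to isolate where the difficulty really lives by grouping the subsets $F$ according to the vertex partition $\pi=\{V_1,\dots,V_t\}$ they induce into components. For a fixed such $\pi$ (each $G[V_i]$ connected) the admissible $F$ are disjoint unions of spanning connected edge sets of the $G[V_i]$, and both $\prod_C a_C=\prod_i a_{V_i}$ and $m^{c(F)}=m^{t}$ depend only on $\pi$. Writing $\sigma(V_i)=\sum (-1)^{|F_i|}$ over spanning connected subgraphs of $G[V_i]$ (a quantity independent of $L$ and of $m$), this yields
\[
P(G,L) - P(G,m) = \sum_{\pi}\Bigl(\prod_i \sigma(V_i)\Bigr)\Bigl(\prod_i a_{V_i} - m^{t}\Bigr).
\]
Because each $a_{V_i}\le m$, the last parenthesis is always $\le 0$; the trouble is that $\prod_i \sigma(V_i)$ alternates in sign, so a termwise argument cannot work. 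Controlling these signs is the crux, and the fact that the conclusion genuinely fails for small $m$ (otherwise we would get $P_\ell=P$ for all $m$) confirms that any successful argument must use the hypothesis $m \ge |E(G)|-1$ in an essential, quantitative way.

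To exploit the edge count I would run an induction on $|E(G)|$ driven by list deletion--contraction: $P(G,L) = P(G-e,L) - P(G/e,L_e)$, where $G/e$ merges the endpoints of $e=uv$ into a single vertex whose list is $L(u)\cap L(v)$ and $L_e$ agrees with $L$ elsewhere, matched term for term against $P(G,m)=P(G-e,m)-P(G/e,m)$. Since the contracted list $L(u)\cap L(v)$ can be strictly smaller than $m$, this forces me to prove a strengthened statement that allows non-uniform list sizes; the threshold $|E(G)|-1$ should propagate because both $G-e$ and $G/e$ have at most $|E(G)|-1$ edges. The main obstacle I anticipate is precisely the subtracted term $P(G/e,L_e)$: it is simply false in general that shrinking one list below $m$ decreases the number of proper colorings (disjoint lists can increase it), so no naive termwise comparison survives. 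I expect the resolution to require a finer bookkeeping in which each unit of list deficiency $m-a_{V_i}$ is charged to a distinct edge of the corresponding component, so that the total negative contribution is bounded by a polynomial of degree strictly below $\deg P(G,m)=|V(G)|$ and is therefore dominated by the leading behavior of $P(G,m)$ once $m\ge |E(G)|-1$. Extracting the sharp constant $|E(G)|-1$ from this charging, rather than a weaker threshold, will be the delicate part of the argument.
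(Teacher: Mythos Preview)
The paper does not prove this theorem at all: it is quoted as a result of Dong and Zhang~\cite{DZ22} and used only as background motivation, with no argument given. There is therefore nothing in the paper to compare your attempt against.

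As for the proposal itself, it is a plan rather than a proof, and you say so yourself: after setting up the inclusion--exclusion and partition regrouping correctly, you identify that the signs of $\prod_i \sigma(V_i)$ alternate and that the contracted list in the deletion--contraction can shrink, and you end by stating that ``extracting the sharp constant $|E(G)|-1$ from this charging \ldots\ will be the delicate part of the argument.'' That delicate part is the entire content of the theorem; everything before it (the broken-cycle/Whitney expansion, the partition regrouping, the list deletion--contraction identity) was already known long before~\cite{DZ22} and underlies the earlier, weaker thresholds of Donner~\cite{D92}, Thomassen~\cite{T09}, and Wang--Qian--Yan~\cite{WQ17}. So what you have written would at best reproduce one of those weaker bounds, not the sharp $m\ge |E(G)|-1$, and the proposal as it stands has a genuine gap exactly where the new idea of~\cite{DZ22} is needed.
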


In 2019, Kaul and the first author introduced a DP-coloring analogue of the chromatic polynomial of a graph in hopes of using it as a tool for making progress on some open questions related to the list color function~\cite{KM19}.  Since its introduction in 2019, DP color functions have received some attention in the literature (see e.g.,~\cite{BH21, DKM22, DY21, HK21, KM21, LY22, MT20, PS23}).  Suppose $\mathcal{H} = (L,H)$ is a cover of graph $G$.  Let $P_{DP}(G, \mathcal{H})$ be the number of $\mathcal{H}$-colorings of $G$.  Then, the \emph{DP color function} of $G$ $P_{DP}(G,m)$ is the minimum value of $P_{DP}(G, \mathcal{H})$ where the minimum is taken over all possible $m$-fold covers $\mathcal{H}$ of $G$.~\footnote{We take $\N$ to be the domain of the DP color function of any graph.}  It is easy to show that for any graph $G$ and $m \in \N$, $P_{DP}(G, m) \leq P_\ell(G,m) \leq P(G,m)$.

Interestingly, unlike the list color function, it is known that $P_{DP}(G,m)$ does not necessarily equal $P(G,m)$ for sufficiently large $m$.  Indeed, in~\cite{KM19} it is shown that if $G$ is a graph with girth that is even, then there is an $N \in \N$ such that $P_{DP}(G, m) < P(G,m)$ whenever $m \geq N$ (this result was further generalized by Dong and Yang in~\cite{DY21}).  This leads to a longstanding open question about DP color functions that served as one of the motivations for this paper. 

\begin{ques} [\cite{KM19}] \label{ques: polynomial}
For any graph $G$ does there always exist an $N \in \N$ and a polynomial $p(m)$ such that $P_{DP}(G,m) = p(m)$ whenever $m \geq N$?
\end{ques}

Recently, the first author~\cite{M21} introduced the \emph{dual DP color function} of a graph $G$, denoted $P^{*}_{DP}(G,m)$, which equals the maximum value of $P_{DP}(G, \mathcal{H})$ where the maximum is taken over all full $m$-fold covers $\mathcal{H}$ of $G$.  Every graph $G$ has a full $m$-fold cover with a canonical labeling. So, $P_{DP}(G,m) \leq P_{\ell}(G,m) \leq P(G,m) \leq P^{*}_{DP}(G,m)$.  In addition to providing an upper bound on the chromatic polynomial, the dual DP color function of a graph also appears in a lower bound for the DP color function of the graph based on a deletion-contraction relation (see~\cite{M21}).

In this paper, we show that the answer to the analogue of Question~\ref{ques: polynomial} for the dual DP color function is no.  We hope that our approach can provide new insights into answering Question~\ref{ques: polynomial}.

\subsection{Summary of Results}

The results in this paper came from studying dual DP color functions of complete graphs.  It is known that $P_{DP}^*(K_2,m) = m(m-1)$ for each $m \in \N$ and $P_{DP}^*(K_3,m) = (m-1)^3 + 1$ for each $m \geq 2$ (see~\cite{M21}).  In Section~\ref{main}, we prove the following.

\begin{thm} \label{thm: main}
Suppose $m \geq 2$.  Then,
\[P^*_{DP}(K_4,m) = \begin{cases} 
      m^4 - 6m^3 + 15m^2 - 13m & \text{if $m$ is even} \\
      m^4 - 6m^3 + 15m^2 - 13m-3 & \text{if $m$ is odd.} 
   \end{cases}
\]  
Consequently, there is no $N \in \N$ and polynomial $p(m)$ such that $P_{DP}^*(K_4,m) = p(m)$ whenever $m \geq N$.
\end{thm}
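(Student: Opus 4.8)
My plan is to turn the definition of $P^*_{DP}(K_4,m)$ into a finite optimization over permutations and then resolve it by inclusion--exclusion. First I would note that, since $\mathcal H=(L,H)$ is a \emph{full} $m$-fold cover, the matching between $L(u)$ and $L(v)$ over each edge $uv$ of $K_4$ is a perfect matching, i.e.\ a bijection; identifying each $L(v)$ with $[m]$, every edge carries a permutation of $[m]$. Relabeling the colors inside each part $L(v)$ does not change the number of $\mathcal H$-colorings, and this freedom lets me assume the three edges at vertex $1$ carry the identity, leaving three permutations $\alpha,\beta,\gamma\in S_m$ on the triangle induced by $\{2,3,4\}$. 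An $\mathcal H$-coloring is then a tuple $(c_1,c_2,c_3,c_4)\in[m]^4$ with $c_2,c_3,c_4\neq c_1$ and $c_3\neq\alpha(c_2)$, $c_4\neq\beta(c_2)$, $c_4\neq\gamma(c_3)$, so $P^*_{DP}(K_4,m)$ is the maximum of the number of such tuples over all $\alpha,\beta,\gamma\in S_m$.

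Next I would count these tuples by inclusion--exclusion over the six-element edge set $E$: the number of good tuples is $\sum_{S\subseteq E}(-1)^{|S|}f(S)$, where $f(S)$ is the number of tuples satisfying the matching equality on every edge of $S$. When $S$ is a forest, $f(S)=m^{c(S)}$ (with $c(S)$ the number of components of the spanning subgraph $(V(K_4),S)$) independently of $\alpha,\beta,\gamma$, so the forests contribute the fixed quantity $m^4-6m^3+15m^2-16m$. When $S$ contains a cycle, $f(S)$ is governed by fixed points of the product of the edge permutations around that cycle. Evaluating this over the four triangles, the three $4$-cycles, the twelve triangle-plus-pendant subgraphs, the six copies of $K_4-e$, and $K_4$ itself, I expect to obtain
\[ P_{DP}(K_4,\mathcal H)=m^4-6m^3+15m^2-16m+(3-m)(a+b+g+t)+(p_1+p_2+p_3)-(q_1+\cdots+q_6)+r, \]
where $a,b,g$ are the numbers of fixed points of $\alpha,\beta,\gamma$ and $t=\mathrm{fix}(\beta^{-1}\gamma\alpha)$ (the triangles), $p_1,p_2,p_3$ are the fixed-point counts of $\gamma\alpha$, $\beta\alpha^{-1}$, $\gamma^{-1}\beta$ (the $4$-cycles), each $q_i$ records the common fixed points forced by a copy of $K_4-e$, and $r=|\mathrm{Fix}(\alpha)\cap\mathrm{Fix}(\beta)\cap\mathrm{Fix}(\gamma)|$. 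Maximizing $P_{DP}(K_4,\mathcal H)$ thus amounts to maximizing $\Phi:=(3-m)(a+b+g+t)+(p_1+p_2+p_3)-\sum_i q_i+r$.

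To bound $\Phi$ I would first record the easy facts $q_i\ge r$ (so $-\sum_i q_i+r\le 0$), $(3-m)(a+b+g+t)\le 0$, and $p_i\le m$, which give $\Phi\le 3m$. For even $m$ this is attained by taking $\alpha=\beta=\gamma$ equal to a fixed-point-free involution $\iota$: then $a=b=g=t=0$ and each of $\gamma\alpha,\beta\alpha^{-1},\gamma^{-1}\beta$ is the identity, so $p_1=p_2=p_3=m$ and $\Phi=3m$. This is precisely where the signed-graph picture enters: $\iota$ is \emph{negation} on a color set of even size, and the cover realizes $K_4$ as the signed graph with a positive star at $1$ and a negative triangle on $\{2,3,4\}$, whose zero-free (balanced) chromatic polynomial following Zaslavsky supplies the even formula. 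For odd $m$ no fixed-point-free involution exists, and the core of the argument is the resulting trade-off. I would prove the key lemma that for odd $m$, if $\alpha,\beta,\gamma,\beta^{-1}\gamma\alpha$ are all fixed-point-free then $p_1+p_2+p_3\le 3m-3$: a permutation can never have exactly $m-1$ fixed points, so each $p_i$ is either $m$ or at most $m-2$; the three products cannot all be the identity for odd $m$ (that forces $\alpha=\beta=\gamma$ to be an involution, which must fix a point); and if two of them are the identity then $\beta=\alpha$ and $\gamma=\alpha^{-1}$, whence the third equals $\mathrm{fix}(\alpha^2)\le m-3$ because a fixed-point-free permutation of an odd set leaves at least three points outside its $2$-cycles. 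Handling the complementary case $a+b+g+t\ge 1$ then gives $\Phi\le 3m-3$, attained by $\beta=\alpha$, $\gamma=\alpha^{-1}$ with $\alpha$ fixed-point-free and carrying as many $2$-cycles as possible.

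Substituting $\Phi=3m$ (even) and $\Phi=3m-3$ (odd) into the displayed identity yields the two cases of the formula. The final sentence of the statement is then immediate: if a polynomial $p(m)$ agreed with $P^*_{DP}(K_4,m)$ for all $m\ge N$, then $p(m)-(m^4-6m^3+15m^2-13m)$ would vanish at the infinitely many even integers $m\ge N$ and hence be identically zero, contradicting that it must equal $-3$ at every odd $m\ge N$. I expect the main obstacle to be the upper bound for odd $m$---showing the trade-off genuinely cannot beat $3m-3$---and in particular the small cases $m=3,5$, where the crude inequalities used in the case $a+b+g+t\ge 1$ are too weak and a sharper (or direct) analysis will be required.
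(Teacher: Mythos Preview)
Your proposal is correct and follows essentially the same route as the paper. Both arguments normalize the cover so the three edges at one vertex carry the identity, derive the same inclusion--exclusion identity (the paper's Identity~(1) is your displayed formula with $t_i\leftrightarrow a,b,g,t$, $q_i\leftrightarrow p_i$, $m_i\leftrightarrow q_i$, $z\leftrightarrow r$), bound $\Phi\le 3m$ and attain it for even $m$ with a fixed-point-free involution, prove the key odd lemma that $p_1+p_2+p_3\le 3m-3$ when all four triangle permutations are fixed-point-free, and realize $3m-3$ via $\beta=\alpha$, $\gamma=\alpha^{-1}$ with $\alpha$ a single $3$-cycle plus $2$-cycles; the paper's explicit permutation $f$ is exactly this choice. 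The only noteworthy differences are cosmetic: where the paper (Lemma~13) rules out the third $4$-cycle count equaling $m-2$ by a contradiction on $\sigma_{2,4}$, you argue more directly that it equals $\mathrm{fix}(\alpha^2)\le m-3$; and the paper disposes of $m\in\{2,3,4,5\}$ by computer search (Appendix), which matches your acknowledgement that $m=3,5$ require a separate check since the crude bound $(3-m)(a+b+g+t)+3m\le 2m+3$ only yields $\Phi\le 3m-3$ once $m\ge 6$.
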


Clearly, Theorem~\ref{thm: main} shows that the answer to the analogue of Question~\ref{ques: polynomial} for the dual DP color function is no.  Interestingly, Theorem~\ref{thm: main} also reveals a connection between the dual DP color function of $K_4$ and the \emph{balanced chromatic polynomial} of a signed version of $K_4$ (balanced chromatic polynomials were introduced by Zaslavsky in 1982~\cite{Z82}).  

Using the notation in~\cite{CL22}, we now make this connection explicit.  Suppose $G$ is a graph.  A \emph{signed graph} on $G$ is a pair $SG = (G, \sigma)$ where the \emph{sign} on each edge of $SG$ is given by the function $\sigma : E(G) \rightarrow \{1, -1\}$.  For $\lambda \geq 2$, a \emph{$\lambda$-coloring} of $SG$ is a map $f: V(G) \rightarrow K$ where $K = \{-t, \ldots, 1,0,1, \ldots, t\}$ when $\lambda = 2t+1$ for some $t \in \N$ and $K = \{-t, \ldots, 1,1, \ldots, t\}$ when $\lambda = 2t$ for some $t \in \N$.  Additionally, $f$ is a \emph{proper $\lambda$-coloring} of $SG$ if for each $uv \in E(G)$, $f(u) \neq \sigma(uv)f(v)$.  For each $\lambda \geq 2$, let $P_{SG}(\lambda)$ denote the number of proper $\lambda$-colorings of $SG$.  The \emph{balanced chromatic polynomial} of $SG$ is then $P_{SG}(2t)$ for each $t \in \N$ (it can be proven $P_{SG}(2t)$ is a polynomial in $t$).  Finally, suppose $G = K_4$, $\sigma: E(G) \rightarrow \{1, -1\}$ is identically -1, and $SG = (G, \sigma)$.  Then, it can be verified that Theorem~\ref{thm: main} implies $P^*_{DP}(K_4,2l) = P_{SG}(2l)$ for each $l \in \N$.

The following two questions are now natural.

\begin{ques} \label{ques: connection}
Suppose $n \in \N$ and $G = K_n$.  Let $\sigma: E(G) \rightarrow \{1, -1\}$ be identically -1, and let $SG = (G, \sigma)$.  Is it the case that $P^*_{DP}(K_n,2l) = P_{SG}(2l)$ for each $l \in \N$?  
\end{ques}

\begin{ques} \label{ques: general}
For each $n \geq 4$, is it the case that there is no $N \in \N$ and polynomial $p(m)$ such that $P_{DP}^*(K_n,m) = p(m)$ whenever $m \geq N$?
\end{ques}

It is easy to verify the answer to Question~\ref{ques: connection} is yes when $n \in [3]$, and Theorem~\ref{thm: main} implies the answer is yes when $n=4$.  In proving Theorem~\ref{thm: main} we will discover that the interaction between the subcovers corresponding to the three and four cycles contained in $K_4$ is quite important.  So, our final result, which we prove in Section~\ref{more}, can be viewed as progress toward both of the Questions above.

\begin{thm} \label{thm: generalize}
Suppose $G = K_n$ with $n \geq 4$, and let $t = \binom{n}{2}$.  If $m > 2^{t+1} + t + n - 6$ and $\mathcal{H} = (L,H)$ is a full $m$-fold cover of $G$ satisfying $P_{DP}(G, \mathcal{H}) = P_{DP}^*(G,m)$, then $H$ is triangle-free.  Moreover, when $m > 2^{t+1} + t + n - 6$,
$$f(m) - 2^{t}m^{n-4} \leq P^*_{DP}(G,m) \leq  f(m) + 2^{t}m^{n-4}$$
where 
$$f(m) = m^n - t m^{n-1} + \binom{t}{2} m^{n-2} - \left(\binom{t}{3} - \binom{n}{3} - 3\binom{n}{4} \right) m^{n-3}.$$
\end{thm}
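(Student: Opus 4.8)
My plan is to evaluate $P_{DP}(K_n,\mathcal H)$ by inclusion--exclusion over the edge-subsets of $K_n$. Since $\mathcal H=(L,H)$ is full, after identifying each $L(v)$ with $[m]$ every edge $uv$ of $K_n$ corresponds to a perfect matching, i.e.\ a permutation $\sigma_{uv}\in S_m$ with $\sigma_{vu}=\sigma_{uv}^{-1}$. For $D\subseteq E(K_n)$ let $V(D)$ be its non-isolated vertices and $c(D)$ its number of components, and let $r(D)$ be the number of ways to choose one vertex of $H$ from each $L(v)$ with $v\in V(D)$ so that every edge of $D$ is an edge of $H$. Counting the transversals that realize all edges of $D$ gives
\[
P_{DP}(K_n,\mathcal H)=\sum_{D\subseteq E(K_n)}(-1)^{|E(D)|}\,r(D)\,m^{\,n-|V(D)|}.
\]
On each component of $D$ one color may be chosen freely and propagated along a spanning tree, each independent cycle imposing one ``holonomy'' constraint; hence $r(D)=\prod_i r(D_i)\le m^{c(D)}$, a tree component contributes a factor $m$, and a cyclic component contributes the number of fixed points of the product of its matching permutations around a cycle. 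In particular the value of $r$ on a triangle (resp.\ a $4$-cycle) is the number of rainbow triangles (resp.\ rainbow $4$-cycles) of $H$ on those parts, a quantity that may be as large as $m$.

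Writing $e^\ast(D)=|V(D)|-c(D)$, the term indexed by $D$ has absolute value at most $m^{\,n-e^\ast(D)}$. As there are only $2^{t}$ edge-subsets, the total contribution of all $D$ with $e^\ast(D)\ge 4$ is at most $2^{t}m^{\,n-4}$ in absolute value, which is exactly the error term in the statement; so it remains to understand the finitely many shapes with $e^\ast(D)\le 3$. The forests among these reproduce a fixed polynomial whose part of degree $\ge n-3$ is $f(m)-3\binom n4 m^{\,n-3}$: the coefficients $1,\,-t,\,\binom t2,\,-(\binom t3-\binom n3)$ arise by counting the $j$-edge forests, i.e.\ the $j$-edge subgraphs minus the triangles. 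The cyclic shapes with $e^\ast(D)\le 3$ are exactly the triangle (contributing $-\sum_T r(T)\,m^{\,n-3}$), the $4$-cycle (contributing $+\sum_{C_4}r(C_4)\,m^{\,n-4}\le 3\binom n4 m^{\,n-3}$, with equality iff every $4$-cycle of $H$ has holonomy the identity), and the paw, the diamond, $K_4$, and the triangle-plus-disjoint-edge, each of which contains a triangle.

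The crux is that each triangle-containing shape has $r$ bounded by the $r$ of one of its triangles. Writing $\Delta(H)=\sum_T r(T)$ for the number of triangles in $H$, the combined positive contribution of the paws, the $K_4$'s, and the triangle-plus-edges is therefore at most $(t+n-6)\Delta(H)\,m^{\,n-4}$, where the coefficient $3(n-3)+\binom{n-3}2+(n-3)=t+n-6$ bounds these extensions of a fixed triangle, while the diamonds contribute negatively and can be discarded for an upper bound. Consequently, for $m>t+n-6$ the entire triangle-dependent part, namely $-\Delta(H)m^{\,n-3}$ together with these lower-order positive terms, is at most $0$; bounding the $4$-cycles by their maximum then yields $P_{DP}(K_n,\mathcal H)\le f(m)+2^{t}m^{\,n-4}$ for every full cover. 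For the matching lower bound I would exhibit a triangle-free cover whose $4$-cycles are (all, or all but boundedly many) canonical: for even $m$ the negation cover $\sigma_{uv}\colon i\mapsto -i$ on $\{-m/2,\dots,-1,1,\dots,m/2\}$ is triangle-free because an odd cycle has holonomy $-\mathrm{id}$ and an even cycle has holonomy $\mathrm{id}$, so it realizes $f(m)$ up to the $e^\ast\ge4$ error; a bounded modification treats odd $m$ with $O(1)$ total $4$-cycle defect. This gives $P_{DP}^\ast(K_n,m)\ge f(m)-2^{t}m^{\,n-4}$ and, with the upper bound, the ``moreover'' estimate.

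Finally, the triangle-free conclusion follows from the same estimate: if an optimal cover had a triangle then $\Delta(H)\ge1$, whence
\[
P_{DP}(K_n,\mathcal H)\le f(m)-m^{\,n-3}+\bigl(t+n-6+2^{t}\bigr)m^{\,n-4},
\]
which is strictly less than $f(m)-2^{t}m^{\,n-4}\le P_{DP}^\ast(K_n,m)$ precisely when $m>2^{t+1}+t+n-6$, contradicting optimality. The main obstacle is the third step: the realization counts $r$ of the cyclic shapes are cover-dependent and individually of order $m$, so one must show that the positive contributions of the triangle-containing shapes can never overtake the $-\Delta(H)m^{\,n-3}$ penalty, and must carefully track the constant $t+n-6$ (and the doubling to $2^{t+1}$ coming from the two-sided error) that fixes the threshold; the explicit triangle-free, near-canonical cover needed for the lower bound when $m$ is odd is a secondary technical point.
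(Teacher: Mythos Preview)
Your approach is essentially the paper's: inclusion--exclusion over edge-subsets of $K_n$, separating the ``main'' shapes (forests with at most three edges, triangles, $4$-cycles, and the four triangle-containing shapes paw, diamond, $K_4$, triangle-plus-edge) from a residual bounded by $2^t m^{n-4}$. Your organization via $e^*(D)=|V(D)|-c(D)$ is a clean repackaging of the paper's case analysis in Lemma~\ref{lem: formulas3}; the list of cyclic shapes with $e^*(D)\le 3$ is correct, and the bound $r(D)\le r(T)$ for each triangle-containing shape is exactly their Lemmas~\ref{lem: plusedge}--\ref{lem: kite}. The coefficient $t+n-6=3(n-3)+\binom{n-3}{2}+(n-3)$ for the positive triangle-containing contributions and the resulting threshold $2^{t+1}+t+n-6$ both match the paper, as does your derivation of the triangle-free conclusion from the two-sided estimate. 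Your even-$m$ negation cover is equivalent to the paper's explicit involution.

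There is one small gap in your lower bound for odd $m$. A triangle-free full cover with odd $m$ cannot have every $4$-cycle holonomy equal to the identity, so the $e^*\le 3$ part of your cover realizes only $f(m)-Cm^{n-4}$ for some $C>0$; the paper's odd-$m$ construction has $C=3\binom{n}{4}$, not $O(1)$ independent of $n$. Combined with your crude bound $|[e^*\ge 4\text{ part}]|\le 2^t m^{n-4}$, you obtain only $P_{DP}^*(K_n,m)\ge f(m)-(C+2^t)m^{n-4}$, which is weaker than claimed. The fix is easy and is precisely the extra positive term $(b+a(t-3)+2b+b/3)m^{n-4}$ in the paper's bound~(\ref{lower}): the number of edge-subsets with $e^*(D)\ge 4$ is at most $2^t$ minus the number with $e^*(D)\le 3$, and the latter already exceeds $3\binom{n}{4}$ (just counting the $4$-cycles themselves), so the slack in your $2^t$ count absorbs the $4$-cycle defect. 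You should make this slack explicit rather than using the crude $2^t$ bound.
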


Note that if $l, n \in \N$, $n \geq 4$, $G = K_n$, $\sigma: E(G) \rightarrow \{1, -1\}$ is identically -1, and $SG = (G, \sigma)$, then Theorem~\ref{thm: generalize} implies that $P_{DP}^*(G,2l) - P_{SG}(2l) = O(l^{n-4})$ as $l \rightarrow \infty$.

\section{Proof of Theorem~\ref{thm: main}} \label{main}

This section is organized as follows.  We begin by showing that the formula in Theorem~\ref{thm: main} is an upper bound on the dual DP color function of $K_4$.  Then, we construct covers to show that the upper bound is attainable.  The key to establishing the upper bound is generalizing the proof technique of the following classical result to the context of DP-coloring.

\begin{thm} [\cite{B12}] \label{pro: base}
Suppose $G$ is a graph.  Then,
$$P(G,m) = \sum_{A \subseteq E(G)} (-1)^{|A|} m^{k_A}$$
where $k_A$ is the number of components of the spanning subgraph of $G$ with edge set $A$.
\end{thm}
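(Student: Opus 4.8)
The plan is to prove this classical identity by inclusion–exclusion over the edge set, organizing a signed count of all colorings according to which edges are monochromatic. First I would work in the universe $\Omega$ of \emph{all} functions $f : V(G) \to [m]$, of which there are exactly $m^{|V(G)|}$; among these, the proper $m$-colorings are precisely the ones that are monochromatic on no edge. For each edge $e = uv \in E(G)$, let $B_e \subseteq \Omega$ be the set of colorings with $f(u) = f(v)$. A coloring is proper exactly when it lies in none of the $B_e$, so that $P(G,m) = |\Omega \setminus \bigcup_{e \in E(G)} B_e|$.

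Next I would apply the inclusion–exclusion principle to this complement, obtaining
$$P(G,m) = \sum_{A \subseteq E(G)} (-1)^{|A|} \left| \bigcap_{e \in A} B_e \right|,$$
where the $A = \emptyset$ term contributes $|\Omega| = m^{|V(G)|}$. It then remains only to evaluate the size of the intersection $\bigcap_{e \in A} B_e$ for an arbitrary fixed $A \subseteq E(G)$.

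The key counting step is to observe that $\bigcap_{e \in A} B_e$ consists of exactly those colorings that are constant on each connected component of the spanning subgraph of $G$ with edge set $A$. Indeed, $f$ lies in this intersection iff $f(u) = f(v)$ for every edge $uv \in A$, and chaining these equalities along edges of $A$ forces $f$ to be constant along any path in $(V(G), A)$, hence constant on each component; conversely, any coloring constant on every component trivially avoids violating any edge of $A$. Such a coloring is determined by an independent choice of one color from $[m]$ for each of the $k_A$ components, so there are exactly $m^{k_A}$ of them. Substituting this count into the inclusion–exclusion sum yields the claimed formula.

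I do not anticipate any serious obstacle, as the argument is entirely elementary. The only point demanding a little care is the final counting step: one must verify that the colorings monochromatic on all of $A$ correspond bijectively to colorings of the components of $(V(G), A)$, and in particular that isolated vertices are correctly counted as singleton components, so that the exponent is the full component count $k_A$ rather than merely the number of nontrivial components.
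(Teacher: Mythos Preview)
Your proof is correct and is the standard inclusion--exclusion argument. The paper does not actually prove this theorem---it is stated with a citation to Birkhoff~\cite{B12} as a classical result---but your approach is precisely the technique the paper then generalizes to the DP-coloring setting (defining the sets $S_i$ of transversals ``bad'' at edge $e_i$ and expanding $P_{DP}(G,\mathcal{H})$ via inclusion--exclusion over $\bigcup_i S_i$).
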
 

We will now introduce some notation that will be used for the remainder of this section.  Suppose that $G = K_4$, $V(G) = \{v_1, v_2, v_3, v_4 \}$, and $E(G) = \{e_1, e_2, e_3, e_4, e_5, e_6 \}$.  Also, for some $m \geq 4$ we always have that $\mathcal{H}= (L,H)$ is a full $m$-fold cover of $G$ such that $L(v) = \{(v,i): i \in [m] \}$ for each $v \in V(G)$ (we will handle $m < 4$ computationally in Appendix~\ref{code}).  Additionally, for $i, j \in [4]$ with $i < j$, we let $\sigma_{i,j}$ be the permutation of $[m]$ that maps each $q \in [m]$ to the second coordinate of the vertex in $L(v_j)$ that is adjacent to $(v_i,q)$ in $H$.  

Let $\mathcal{U} = \{ I \subseteq V(H) : |L(v) \cap I| = 1 \text{ for each } v \in V(G) \}$.  Clearly, $|\mathcal{U}| = m^4$.  Now, for $i \in [6]$, if $e_i=v_r v_s$, let $S_i$ be the set consisting of each $I \in \mathcal{U}$ with the property that $H[I]$ contains an edge in $E_H(L(v_r), L(v_s))$.  Also, for each $i \in [6]$ let $C_i = \mathcal{U} - S_i$.  Clearly,
$$P_{DP}(G, \mathcal{H}) = \left | \bigcap_{i=1}^6 C_i \right |.$$
So, by the Inclusion-Exclusion Principle, we see that
$$P_{DP}(G, \mathcal{H}) = |\mathcal{U}| - \left | \bigcup_{i=1}^6 S_i \right | =  m^4 - \sum_{k=1}^6 (-1)^{k-1} \left ( \sum_{1 \leq i_1 < \cdots < i_k \leq 6} \left | \bigcap_{j=1}^k S_{i_j} \right| \right).$$  
We now establish some bounds on the terms in this formula.
\begin{lem} [\cite{MT20}] \label{lem: formulas2}
Assuming the set-up established above, the following two statements hold. \\
(i) For any $k \in [2]$ and $i_1, \ldots, i_k \in [6]$ satisfying $i_1 < \cdots < i_k$, $\left | \bigcap_{j=1}^k S_{i_j} \right|  = m^{4-k}$. \\  
(ii) If $e_{i_1}, \ldots, e_{i_3}$ are distinct edges that are not the edges of a $3$-cycle in $G$, $\left | \bigcap_{j=1}^3 S_{i_j} \right| = m$.  
\end{lem}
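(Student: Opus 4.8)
The plan is to reinterpret each set $S_i$ as a single constraint on a transversal and then count the solutions of a system of such constraints. First I would identify each $I \in \mathcal{U}$ with the function $t : V(G) \to [m]$, where $t(v)$ is the second coordinate of the unique vertex of $I$ lying in $L(v)$. Since $\mathcal{H}$ is full, for $e_i = v_r v_s$ with $r < s$ the unique edge of $H$ joining $(v_r, q)$ to a vertex of $L(v_s)$ ends at $(v_s, \sigma_{r,s}(q))$; hence $I \in S_i$ if and only if $t(v_s) = \sigma_{r,s}(t(v_r))$. Consequently $\bigcap_{j=1}^k S_{i_j}$ is in bijection with the set of functions $t$ satisfying the system of $k$ equations, one for each edge $e_{i_j}$.

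The key step is the following counting claim: if the edges $e_{i_1}, \ldots, e_{i_k}$ form a forest $F$ on the vertex set $V(G)$, then the number of $t$ satisfying all $k$ equations equals $m^{c}$, where $c$ is the number of connected components of the spanning subgraph of $G$ with edge set $F$ (isolated vertices counted as components). To prove this I would root each tree component of $F$ at a single vertex, assign that vertex a value freely (giving $m$ choices per component), and then propagate along the tree edges: each equation $t(v_s) = \sigma_{r,s}(t(v_r))$ determines one endpoint value from the other via the bijection $\sigma_{r,s}$ (or its inverse), so every non-root value is forced uniquely and no inconsistency can arise because $F$ is acyclic. This yields exactly $m^{c}$ solutions, and for a forest on the four vertices of $K_4$ with $k$ edges acyclicity gives $c = 4 - k$.

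It then remains to verify the forest hypothesis in each part. For (i), any $k \in [2]$ edges of $K_4$ contain no cycle, since the shortest cycle in a simple graph has length $3$; thus they form a forest with $c = 4 - k$, and the claim gives $\left|\bigcap_{j=1}^k S_{i_j}\right| = m^{4-k}$. For (ii), three distinct edges of $K_4$ likewise contain a cycle only when they are the edges of a $3$-cycle; excluding that case, the three edges are acyclic and span all four vertices, hence form a spanning tree with $c = 1$, and the claim gives $\left|\bigcap_{j=1}^3 S_{i_j}\right| = m$.

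I expect the propagation-consistency step to be the only delicate point. Acyclicity is exactly what guarantees that following the matchings through $F$ never revisits an already-determined value with a conflicting requirement, so the count is independent of the particular cover $\mathcal{H}$. This is precisely the feature that fails for a $3$-cycle: closing the loop imposes a fixed-point condition on a composition of the $\sigma_{r,s}$, whose number of solutions depends on the permutations themselves. That dependence is why triangles must be excluded from this lemma and handled separately elsewhere in the argument.
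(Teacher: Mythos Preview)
Your proof is correct. The identification of transversals with functions $t:V(G)\to[m]$, the translation of membership in $S_i$ into the equation $t(v_s)=\sigma_{r,s}(t(v_r))$, and the propagation argument along a rooted forest are all sound; acyclicity is exactly what guarantees consistency, and for $K_4$ the check that $k\le 2$ edges, or three edges not forming a triangle, always constitute a forest (indeed a spanning tree in the latter case) is routine.

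There is nothing to compare against in this paper: the lemma is simply quoted from~\cite{MT20} and no proof is given here (the analogous statements~(i) and~(ii) of Lemma~\ref{lem: formulas3} are likewise deferred to~\cite{MT20}). Your argument is thus a self-contained substitute for that citation. It is worth noting that your forest-counting claim is essentially the $k_A$-component count appearing in Theorem~\ref{pro: base}, specialized to DP-covers: for acyclic $A$ the number of transversals forced by the matchings on $A$ is $m^{k_A}$ regardless of the cover, whereas for $A$ containing a cycle the count becomes cover-dependent---precisely the phenomenon the rest of Section~\ref{main} is devoted to quantifying.
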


\begin{lem} \label{lem: subgraph}
Assuming the set-up established above, suppose $k \in \{3,4,5,6\}$, and $i_1, \ldots, i_k \in [6]$ satisfy $i_1 < \cdots < i_k$.  Let $G'$ be the subgraph of $G$ with edge set $\{e_{i_1}, \ldots, e_{i_k} \}$ and vertex set consisting of only the vertices that are an endpoint of at least one element in $E(G')$.  Suppose $\mathcal{H}' = (L',H')$ is the subcover of $\mathcal{H}$ corresponding to $G'$.  Let $C(G')$ be the number of copies of $G'$ contained in $H'$.  If $k=3$ and $e_{i_1}, \ldots, e_{i_3}$ are the edges of a 3-cycle in $G$, then 
$$\left | \bigcap_{j=1}^3 S_{i_j} \right| = mC(G') \text{; otherwise,  } \left | \bigcap_{j=1}^k S_{i_j} \right| = C(G').$$    
\end{lem}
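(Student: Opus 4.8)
The plan is to decompose each transversal in the intersection according to which vertices it actually constrains. First I would observe that membership of $I$ in $S_{i_j}$ depends only on the two coordinates of $I$ lying in the parts $L(v_r)$ and $L(v_s)$, where $e_{i_j} = v_r v_s$: indeed, since $\mathcal{H}$ is full, the vertex $(v_r, a)$ chosen by $I$ has a unique neighbor in $L(v_s)$, so $I \in S_{i_j}$ amounts to the single requirement that $I$ also selects that neighbor. Consequently, membership in $\bigcap_{j=1}^k S_{i_j}$ imposes conditions only on the coordinates of $I$ indexed by $V(G')$, while the coordinate of $I$ in $L(u)$ is completely unconstrained for each $u \in V(G) \setminus V(G')$.

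Second, this observation yields a clean product factorization. Each $I \in \bigcap_{j=1}^k S_{i_j}$ is specified uniquely by two independent pieces of data: a choice of one vertex from each $L(u)$ with $u \in V(G')$ that satisfies every edge constraint coming from $E(G')$, together with a free choice of one vertex from each $L(u)$ with $u \in V(G) \setminus V(G')$. The first piece is, by definition, a partition-respecting copy of $G'$ inside the subcover $H'$, of which there are $C(G')$; the second contributes a factor of $m^{|V(G)| - |V(G')|}$. Hence $\left| \bigcap_{j=1}^k S_{i_j} \right| = C(G') \cdot m^{4 - |V(G')|}$.

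Third, I would settle $|V(G')|$ by a short combinatorial check inside $K_4$. When $k = 3$ and $e_{i_1}, e_{i_2}, e_{i_3}$ are the edges of a $3$-cycle, they span exactly three vertices, so $|V(G')| = 3$ and the factor is $m^{4-3} = m$, giving $m\,C(G')$. In every remaining case — three edges not forming a triangle, or any $k \in \{4,5,6\}$ edges — the edge set covers all four vertices of $K_4$ (a set of three edges on only three vertices is forced to be a triangle, and $K_3$ already has all three of its edges, so four or more edges cannot avoid a fourth vertex), whence $|V(G')| = 4$ and the factor is $m^0 = 1$, giving $C(G')$.

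The only point demanding care is the correspondence in the second step: I must verify that a transversal of $\{L(u) : u \in V(G')\}$ lies in the restricted intersection exactly when it is a partition-respecting copy of $G'$ in $H'$. This hinges on the structure of the subcover — the only edges present in $H'$ between distinct parts are those belonging to the matchings associated with edges of $E(G')$ — so inducing all of $E(G')$ is equivalent to satisfying all $k$ of the selected edge constraints simultaneously. I expect this bookkeeping, together with the elementary spanning count for edge sets of $K_4$, to be the main (though modest) obstacle, after which the lemma follows directly from the product formula.
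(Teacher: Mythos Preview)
Your proposal is correct and follows essentially the same approach as the paper: both arguments set up a bijection between $\bigcap_{j=1}^k S_{i_j}$ and the product of (vertex sets of copies of $G'$ in $H'$) with free choices over $L(u)$ for $u \in V(G) \setminus V(G')$. Your presentation is marginally more uniform in that you first derive the single formula $\left|\bigcap_{j=1}^k S_{i_j}\right| = C(G')\cdot m^{4-|V(G')|}$ and then read off the two cases from $|V(G')|\in\{3,4\}$, whereas the paper treats the triangle case via an explicit bijection $M(V,x)=V\cup\{(v_q,x)\}$ and the remaining case by observing $\mathcal{C}(G')=\bigcap_{j=1}^k S_{i_j}$ directly; the content is the same.
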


\begin{proof}
Let $\mathcal{C}(G')$ be the set consisting of the vertex set of each copy of $G'$ in $H'$.  Note that any element of $\mathcal{C}(G')$ is a transversal of $\mathcal{H}'$.  Also, $\bigcap_{j=1}^k S_{i_j}$ has an element if and only if $\mathcal{C}(G')$ has an element. So, both results are clear when $C(G') = 0$, and we suppose that $C(G') > 0$. 

First, suppose $k=3$ and $e_{i_1}, \ldots, e_{i_3}$ are the edges of a 3-cycle in $G$.  Suppose $\{v_q\} = V(G) - V(G')$.  Let $M : \mathcal{C}(G') \times [m] \rightarrow \bigcap_{j=1}^3 S_{i_j}$ be the function given by $M(V,x) = V \cup \{(v_q,x) \}$.  It is easy to verify that $M$ is a bijection which means that $\left | \bigcap_{j=1}^3 S_{i_j} \right| = mC(G')$.

In the case $k \neq 3$, or $k=3$ and $e_{i_1}, \ldots, e_{i_3}$ are not the edges of a 3-cycle in $G$, $\mathcal{C}(G') = \bigcap_{j=1}^k S_{i_j}$ which completes the proof.
\end{proof}

Now, suppose that $T_1, T_2, T_3,$ and $T_4$ are the 3-cycles contained in $G$.  For each $i \in [4]$, let $t_i$ be the number of $3$-cycles contained in the second coordinate of the subcover of $\mathcal{H}$ corresponding to $T_i$.  Note that the vertex set of each such $3$-cycle is a transversal of the subcover of $\mathcal{H}$ corresponding to $T_i$.   

\begin{lem} \label{lem: plusedge}
Suppose $G'$ is the spanning subgraph of $G$ such that $E(G')$ contains all the edges of $T_i$ for some $i \in [4]$ and one additional edge $e_r$.  Suppose $\mathcal{H}' = (L',H')$ is the subcover of $\mathcal{H}$ corresponding to $G'$.  Then, $C(G') = t_i$ where $C(G')$ is the number of copies of $G'$ contained in $H'$
\end{lem}

\begin{proof}
Let $\mathcal{C}(G')$ be the set consisting of the vertex set of each copy of $G'$ in $H'$, and let $\mathcal{T}$ be the set consisting of the vertex set of each copy of $T_i$ contained in the second coordinate of the subcover of $\mathcal{H}$ corresponding to $T_i$.  Since any element of $\mathcal{C}(G')$ is a transversal of $\mathcal{H}'$, it is clear that if $\mathcal{C}(G') = \emptyset$, then $\mathcal{T} = \emptyset$.  So, we may assume $C(G') > 0$.  

Suppose $\{v_q\} = V(G) - V(T_i)$ (note $v_q$ must be an endpoint of $e_r$).  Let $M: \mathcal{C}(G') \rightarrow \mathcal{T}$ be the function that maps each $I \in \mathcal{C}(G')$ to the set obtained from $I$ by deleting the element in $I$ with first coordinate $v_q$.  It is easy to verify that $M$ is a bijection, and the result follows.
\end{proof}

Now, suppose that $M_1, \ldots, M_6$ are the copies of $K_4$ minus an edge contained in $G$.  For each $i \in [6]$, let $m_i$ be the number of copies of $M_i$ contained in the second coordinate of the subcover of $\mathcal{H}$ corresponding to $M_i$.   Note that the vertex set of each such copy is a transversal of the subcover of $\mathcal{H}$ corresponding to $M_i$.

\begin{lem} \label{lem: kite}
Suppose $G' = M_i$ for some $i \in [6]$, and assume $E(G')$ contains all the edges of $T_j$ and $T_r$ for some $j,r \in [4]$ with $j \neq r$.  Then, $m_i \leq \min\{t_j , t_r\}$.
\end{lem}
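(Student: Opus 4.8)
The plan is to deduce the two inequalities $m_i \le t_j$ and $m_i \le t_r$ separately and symmetrically, each time reducing to Lemma~\ref{lem: plusedge} by recognizing a copy of the diamond $M_i$ as a copy of a ``triangle-plus-incident-edge'' subgraph. The first thing I would record is the combinatorial structure of $M_i = K_4 - e$. Since every edge of $K_4$ lies in exactly two of its four triangles, deleting a single edge destroys exactly two triangles and leaves exactly two; hence $M_i$ contains precisely two triangles, which must be $T_j$ and $T_r$, and these share the unique edge of $M_i$ disjoint from the deleted edge. Concretely, writing $V(G) = \{a,b,c,d\}$ so that the deleted edge is $cd$, we have $T_j = abc$, $T_r = abd$, they share $ab$, and in $M_i$ the vertex $d$ omitted by $T_j$ is joined to both $a$ and $b$ (and symmetrically for $c$).

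To prove $m_i \le t_j$, I would let $G''$ be the spanning subgraph of $G$ whose edge set consists of the three edges of $T_j$ together with the single additional edge $ad$. Since $ad$ is incident to the vertex $d \notin V(T_j)$, this $G''$ has exactly the form required by Lemma~\ref{lem: plusedge}, which therefore gives $C(G'') = t_j$. The key observation is then that a copy of $M_i$ in the relevant subcover is exactly a transversal on which all five edges of $M_i$ are realized in $H$, while a copy of $G''$ is a transversal on which the four edges of $G''$ are realized; because realization of an edge is intrinsic to $H$ and $E(G'') \subseteq E(M_i)$, every copy of $M_i$ is automatically a copy of $G''$. As distinct copies have distinct vertex sets, this inclusion of copy-sets is injective, so $m_i \le C(G'') = t_j$. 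Running the identical argument with the triangle $T_r$ and the additional edge $ac$ (forming $G'''$ with $C(G''') = t_r$) yields $m_i \le t_r$, and combining the two bounds gives $m_i \le \min\{t_j, t_r\}$.

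I do not expect a genuine analytic obstacle here; the difficulty is purely one of careful bookkeeping. The main point to get exactly right is the reduction: $T_j$ lives on only three fibers whereas $M_i$ lives on four, so one cannot simply declare $M_i$-copies to be a subset of $T_j$-copies. Routing through the four-vertex subgraph $G''$ and invoking Lemma~\ref{lem: plusedge} is what legitimizes the comparison, and I would be careful to verify both that the extra edge I adjoin is incident to the vertex omitted by the triangle (so the hypothesis of Lemma~\ref{lem: plusedge} genuinely applies) and that passing from the edge set of $M_i$ to the smaller edge set of $G''$ can only weakly increase the number of copies.

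As a self-contained alternative, one could instead map each copy of $M_i$ to its sub-copy of $T_j$ by deleting the vertex lying in the fiber $L(d)$; injectivity then follows because $E_H(L(a),L(d))$ is a matching, so the deleted vertex is uniquely recoverable from the retained vertex in $L(a)$, forcing two copies of $M_i$ with a common image to coincide. I would nonetheless favor the reduction to Lemma~\ref{lem: plusedge}, as it is shorter and keeps the argument uniform with the preceding lemmas.
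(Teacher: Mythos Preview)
Your argument is correct, and your primary route---embedding each $M_i$-copy into a copy of the four-edge spanning subgraph $G'' = T_j + ad$ and then invoking Lemma~\ref{lem: plusedge} to get $m_i \le C(G'') = t_j$---is genuinely different from the paper's. The paper instead characterizes the $M_i$-copies directly as the set $\{V_1 \cup V_2 : V_1 \in \mathcal{T}_j,\ V_2 \in \mathcal{T}_r,\ |V_1 \cap V_2| = 2\}$ and then observes that each $V_1 \in \mathcal{T}_j$ admits at most one $V_2 \in \mathcal{T}_r$ with $|V_1 \cap V_2|=2$ (and symmetrically), so $|\mathcal{T}| \le \min\{t_j,t_r\}$. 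Your reduction is a bit more economical because it reuses Lemma~\ref{lem: plusedge} rather than reproving a matching-based injectivity statement, while the paper's decomposition handles both inequalities in a single symmetric stroke and makes explicit that an $M_i$-copy is literally a glued pair of triangle-copies. Your ``self-contained alternative'' (delete the vertex in $L(d)$ and recover it via the matching $E_H(L(a),L(d))$) is essentially a one-sided rephrasing of the paper's argument.
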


\begin{proof}
Since the result is clear when $m_i = 0$, we suppose $m_i > 0$.  Suppose $\mathcal{H}' = (L',H')$ is the subcover of $\mathcal{H}$ corresponding to $G'$.  Let $\mathcal{C}(G')$ be the set consisting of the vertex set of each copy of $G'$ in $H'$, and let $\mathcal{T}_j$ (resp. $\mathcal{T}_r$) be the set consisting of the vertex set of each copy of $T_j$ (resp. $T_r$) contained in the second coordinate of the subcover of $\mathcal{H}$ corresponding to $T_j$ (resp. $T_r$). Now, let
$$\mathcal{T} = \{V_1 \cup V_2 : V_1 \in \mathcal{T}_j, V_2 \in \mathcal{T}_r, |V_1 \cap V_2| = 2 \}.$$
It is easy to see that $\mathcal{C}(G') = \mathcal{T}$.  Then since for each $V \in \mathcal{T}_j$ there is at most one $V' \in \mathcal{T}_r$ such that $|V \cap V'|=2$ (this also holds when $j$ and $r$ are interchanged), $\min\{t_j,t_r \} \geq |\mathcal{T}|$.
\end{proof}

Next, suppose $Q_1, \ldots, Q_3$ are the 4-cycles contained in $G$.  For each $i \in [3]$, let $q_i$ be the number of $4$-cycles contained in the second coordinate of the subcover of $\mathcal{H}$ corresponding to $Q_i$.  Note that the vertex set of each such $4$-cycle is a transversal of the subcover of $\mathcal{H}$ corresponding to $Q_i$.  Finally let $z$ be the number of copies of $K_4$ contained in $H$.  Using this notation along with Lemmas~\ref{lem: formulas2}, \ref{lem: subgraph}, and~\ref{lem: plusedge} we see:

\begin{align*}
&P_{DP}(G, \mathcal{H}) \\
&=  m^4 + \sum_{k=1}^6 (-1)^{k} \left ( \sum_{1 \leq i_1 < \cdots < i_k \leq 6} \left | \bigcap_{j=1}^k S_{i_j} \right| \right) \\
&= m^4 - 6m^3 + 15m^2 - 16m - m(t_1+t_2+t_3+t_4) + \sum_{k=4}^6 (-1)^{k} \left ( \sum_{1 \leq i_1 < \cdots < i_k \leq 6} \left | \bigcap_{j=1}^k S_{i_j} \right| \right) \\
&= m^4 - 6m^3 + 15m^2 - 16m + (3-m)\sum_{i=1}^4 t_i + \sum_{i=1}^3q_i - \sum_{i=1}^6 m_i + z. \numberthis \label{identity}
\end{align*}

Using Lemma~\ref{lem: kite}, and noting that $q_i \leq m$ for each $i \in [3]$ and $\min \{t_1,t_2, t_3, t_4 \} \geq z$, we obtain the following.

\begin{lem} \label{lem: upper}
For each $m \geq 4$,
$$P^*_{DP}(K_4,m) \leq m^4 - 6m^3 + 15m^2 - 13m.$$ 
Moreover, using the notation above, if $\mathcal{H}$ is a full $m$-fold cover of $G = K_4$ such that $t_i = 0$ for each $i \in [4]$ and $q_j = m$ for each $j \in [3]$, then
$$P_{DP}(G, \mathcal{H}) = m^4 - 6m^3 + 15m^2 - 13m.$$
\end{lem}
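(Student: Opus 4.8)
The plan is to read off the upper bound directly from identity~\eqref{identity} by bounding each nonnegative contribution and each subtracted contribution in the most favorable direction, and then to verify that equality is achievable under the stated conditions on $t_i$ and $q_j$. Starting from
$$P_{DP}(G, \mathcal{H}) = m^4 - 6m^3 + 15m^2 - 16m + (3-m)\sum_{i=1}^4 t_i + \sum_{i=1}^3 q_i - \sum_{i=1}^6 m_i + z,$$
I would observe that for $m \geq 4$ the coefficient $(3-m)$ is negative, so the term $(3-m)\sum_i t_i$ is maximized by taking the $t_i$ as small as possible, i.e.\ by $t_i = 0$. To make this rigorous without assuming $t_i=0$, I would instead discard the beneficial $-\sum m_i + z$ terms using Lemma~\ref{lem: kite} and the bound $\min\{t_1,t_2,t_3,t_4\} \geq z$: specifically I would argue that $z - \sum_{i=1}^6 m_i \leq (3-m)\cdot 0 - (3-m)\sum_i t_i$ is dominated, so that the whole expression is at most what is obtained by setting $t_i = 0$, $z = 0$, $m_i = 0$, and each $q_i$ at its maximum value $m$.

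The first main step is therefore to bound $\sum_{i=1}^3 q_i \leq 3m$ (using $q_i \leq m$ for each $i$) and to show that the remaining signed sum $(3-m)\sum t_i - \sum m_i + z$ is at most $0$ whenever $m \geq 4$. For the latter, Lemma~\ref{lem: kite} gives $m_i \leq \min\{t_j, t_r\}$ for the relevant pair of triangles, and the inequality $\min\{t_1,\dots,t_4\} \geq z$ controls the $z$ term; combining these with the strictly negative coefficient $(3-m)$ on $\sum t_i$ forces the net contribution of the triangle-related terms to be nonpositive. Substituting these bounds into~\eqref{identity} yields
$$P_{DP}(G,\mathcal{H}) \leq m^4 - 6m^3 + 15m^2 - 16m + 3m = m^4 - 6m^3 + 15m^2 - 13m,$$
and since $P^*_{DP}(K_4,m)$ is the maximum of $P_{DP}(G,\mathcal{H})$ over all full $m$-fold covers, the upper bound follows.

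For the equality clause, I would simply substitute the hypotheses $t_i = 0$ for all $i \in [4]$ and $q_j = m$ for all $j \in [3]$ back into~\eqref{identity}. When every $t_i = 0$, Lemma~\ref{lem: kite} forces every $m_i = 0$ (since $m_i \leq \min\{t_j, t_r\} = 0$), and the bound $z \leq \min\{t_1,\dots,t_4\} = 0$ forces $z = 0$. Hence identity~\eqref{identity} collapses to
$$P_{DP}(G,\mathcal{H}) = m^4 - 6m^3 + 15m^2 - 16m + 3m = m^4 - 6m^3 + 15m^2 - 13m,$$
giving the claimed exact value.

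I expect the main obstacle to be justifying cleanly that the combined triangle/kite/$K_4$ contribution $(3-m)\sum t_i - \sum m_i + z$ is nonpositive for all admissible covers, rather than merely at the specific cover achieving equality. The subtlety is that increasing some $t_i$ could in principle increase $z$ or the $m_i$; the work lies in confirming, via Lemma~\ref{lem: kite} and the inequality $z \leq \min_i t_i$, that each beneficial term $z$ or $-m_i$ cannot outweigh the penalty $(3-m)t_i$ it is tied to once $m \geq 4$. Everything else is routine substitution into~\eqref{identity}.
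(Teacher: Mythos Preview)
Your proposal is correct and follows essentially the same approach as the paper: the paper's proof is a one-line application of identity~\eqref{identity} together with $q_i \leq m$, $z \leq \min\{t_1,\dots,t_4\}$, and Lemma~\ref{lem: kite}. One small clarification: for the \emph{upper bound} itself you do not actually need Lemma~\ref{lem: kite}, since $-\sum_i m_i \leq 0$ trivially and then $z \leq \min_i t_i \leq \sum_i t_i$ already gives $(3-m)\sum_i t_i - \sum_i m_i + z \leq (4-m)\sum_i t_i \leq 0$ for $m \geq 4$; Lemma~\ref{lem: kite} is only essential for the ``moreover'' clause, exactly as you use it there.
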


We are now ready to prove two important results that apply when $m$ is odd.  These results will allow us to improve on the upper bound in Lemma~\ref{lem: upper} in the case that $m$ is odd.

\begin{lem} \label{lem: odd}
Suppose $m \geq 5$ and $m$ is odd.  If $\mathcal{H}$ is a full $m$-fold cover of $G = K_4$ such that $q_j = m$ for each $j \in [3]$, then $\sum_{i=1}^4 t_i > 0$.
\end{lem}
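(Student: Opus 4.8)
The plan is to reinterpret each count $q_j$ and $t_i$ as the number of fixed points of a permutation of $[m]$ obtained by composing the matching permutations $\sigma_{i,j}$ around the corresponding cycle, and then to finish with a parity argument. First I would observe that the restriction of a transversal to the vertices of a cycle forms a copy of that cycle in $H$ exactly when the colors propagate consistently all the way around. Concretely, for each $4$-cycle $Q_j$ there is a composite permutation $\rho_j$, a product of the relevant $\sigma_{i,j}^{\pm 1}$ taken in cyclic order (for instance $\rho_1 = \sigma_{1,4}^{-1}\sigma_{3,4}\sigma_{2,3}\sigma_{1,2}$ for $v_1v_2v_3v_4$), with $q_j = |\mathrm{Fix}(\rho_j)|$; likewise each triangle $T_i$ gives a composite $\tau_i$ (for instance $\tau_1 = \sigma_{1,3}^{-1}\sigma_{2,3}\sigma_{1,2}$) with $t_i = |\mathrm{Fix}(\tau_i)|$. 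Since a permutation of $[m]$ has exactly $m$ fixed points precisely when it is the identity, the hypothesis $q_j = m$ for all $j \in [3]$ is equivalent to the three permutation identities $\rho_1 = \rho_2 = \rho_3 = \mathrm{id}$.

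Next I would normalize. Relabeling the colors inside each $L(v_i)$ by a permutation $\pi_i$ replaces $\sigma_{i,j}$ by $\pi_j \sigma_{i,j}\pi_i^{-1}$, which merely conjugates each $\tau_i$ and hence leaves every $t_i$ unchanged. Choosing the $\pi_i$ so that $\sigma_{1,2}$, $\sigma_{1,3}$, and $\sigma_{1,4}$ all become the identity, the three surviving matchings become exactly $\tau_1$, $\tau_2$, $\tau_3$, and the conditions $\rho_j = \mathrm{id}$ collapse to the relations $\tau_3\tau_1 = \mathrm{id}$, $\tau_2 = \tau_3$, and $\tau_2 = \tau_1$. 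Together these force $\tau_1 = \tau_2 = \tau_3 =: \tau$ with $\tau^2 = \mathrm{id}$, and the fourth triangle then satisfies $\tau_4 = \tau_2^{-1}\tau_3\tau_1 = \tau$ as well. Thus all four triangle permutations equal a single involution $\tau$ of $[m]$.

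Finally, an involution moves its non-fixed points in disjoint transpositions, so it fixes $m$ minus an even number of points; when $m$ is odd this leaves an odd, hence strictly positive, number of fixed points. Therefore $t_1 = |\mathrm{Fix}(\tau)| \geq 1$, and in fact $\sum_{i=1}^4 t_i \geq 4 > 0$, which is the claim. (The same analysis shows that for even $m$ the involution $\tau$ may be fixed-point-free, which is consistent with the unimproved upper bound being attainable in that case.)

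The main obstacle is the bookkeeping in the second step: one must compose the six matchings $\sigma_{i,j}^{\pm 1}$ around each of the three $4$-cycles of $K_4$ in the correct cyclic order and orientation, and then verify that after the normalization the three conditions genuinely collapse to relations forcing a common involution. Once that reduction is in place, the conclusion is a one-line parity argument about fixed points of an involution on an odd-sized set.
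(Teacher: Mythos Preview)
Your proof is correct and follows essentially the same strategy as the paper: normalize the cover so that the cycle conditions force one of the edge permutations to be a fixed-point-free involution of $[m]$, and then invoke the parity obstruction for odd $m$. The only difference is cosmetic: the paper normalizes along the $4$-cycle $Q_1$ (using $q_1=m$ to set $\sigma_{1,2},\sigma_{2,3},\sigma_{3,4},\sigma_{1,4}$ to the identity, leaving the two diagonals), whereas you normalize along the star at $v_1$; your choice has the mild bonus of showing directly that all four $t_i$ coincide, giving $\sum_{i=1}^4 t_i \geq 4$ rather than merely $>0$.
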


\begin{proof}
For the sake of contradiction suppose $\mathcal{H} = (L,H)$ is a full $m$-fold cover of $G = K_4$ such that $q_j = m$ for each $j \in [3]$ and $\sum_{i=1}^4 t_i = 0$ (equivalently $t_i = 0$ for each $i \in [4]$).  Using the notation above, suppose: the vertices of $Q_1$ in cyclic order are $v_1, v_2, v_3, v_4$, the vertices of $Q_2$ in cyclic order are $v_1, v_2, v_4, v_3$, and the vertices of $Q_3$ in cyclic order are $v_1, v_3, v_2, v_4$.

Since $q_1=m$ we may assume that the vertices of $H$ are named so that $\sigma_{1,2}$, $\sigma_{2,3}$, $\sigma_{3,4}$, and $\sigma_{1,4}$ are all the identity permutation.  Since $\sum_{i=1}^4 t_i = 0$, we know that both $\sigma_{1,3}$ and $\sigma_{2,4}$ have no fixed points.  Now, suppose that $x$ is an arbitrary element of $[m]$.  Also, suppose that $\sigma_{1,3}(x) = y$.  Since $q_2 = m$, $\sigma_{1,3}(x) = y$ implies $\sigma^{-1}_{2,4}(y) = x$.  Also, since $q_3 = m$, $\sigma_{2,4}(y) = x$.  So, $\sigma_{2,4}(\sigma_{2,4}(x)) = x$.  Since $x$ was arbitrary, we have that $\sigma_{2,4}$ is an involution.  Since $\sigma_{2,4}$ is a permutation of $[m]$ with $m$ odd that is also an involution, $\sigma_{2,4}$ must have a fixed point which is a contradiction.  
\end{proof}

\begin{lem} \label{lem: oddagain}
Suppose $m \geq 5$ and $m$ is odd.  If $\mathcal{H}$ is a full $m$-fold cover of $G = K_4$ such that $\sum_{i=1}^4 t_i = 0$, then $\sum_{j=1}^3 q_j \leq 3m-3$.
\end{lem}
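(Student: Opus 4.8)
The plan is to recast everything in terms of three permutations of $[m]$ and then apply two elementary facts about numbers of fixed points. Write $\mathrm{fix}(\pi)$ for the number of fixed points of a permutation $\pi$ of $[m]$. First I would relabel the vertices of $H$ within each part so that $\sigma_{1,2}$, $\sigma_{1,3}$, and $\sigma_{1,4}$ all become the identity permutation; this is possible because $\mathcal{H}$ is full (so each $\sigma_{i,j}$ is a genuine permutation of $[m]$), and relabeling within the parts changes neither the fullness of $\mathcal{H}$ nor any of the counts $t_i$ and $q_j$. Setting $\alpha = \sigma_{2,3}$, $\beta = \sigma_{2,4}$, and $\gamma = \sigma_{3,4}$, the condition for a transversal to close up into a cycle around each $T_i$ and $Q_j$ yields $t_1 = \mathrm{fix}(\alpha)$, $t_2 = \mathrm{fix}(\beta)$, $t_3 = \mathrm{fix}(\gamma)$, and
\[ q_1 = \mathrm{fix}(\gamma\alpha), \quad q_2 = \mathrm{fix}(\gamma^{-1}\beta), \quad q_3 = \mathrm{fix}(\beta\alpha^{-1}). \]
In particular the hypothesis $\sum_{i=1}^4 t_i = 0$ already gives (via $t_1 = t_2 = t_3 = 0$) that $\alpha$, $\beta$, $\gamma$ are each fixed-point-free; notably the condition $t_4 = 0$ will not be needed.

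The step I expect to be the crux is the observation that each $q_j$ is itself a number of fixed points of a permutation. Since no permutation of $[m]$ fixes exactly $m-1$ points, this forces each $q_j \in \{0, 1, \ldots, m-2\} \cup \{m\}$. Now suppose for contradiction that $\sum_{j=1}^3 q_j \ge 3m-2$. If exactly $k$ of the three values equal $m$, the remaining $3-k$ are at most $m-2$, so
\[ \sum_{j=1}^3 q_j \le km + (3-k)(m-2) = 3m - 6 + 2k, \]
and $3m-2 \le 3m-6+2k$ forces $k \ge 2$; that is, at least two of $q_1, q_2, q_3$ equal $m$.

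Finally I would split into the three cases according to which pair equals $m$. Reading off the formulas above, $q_1 = m$ forces $\gamma = \alpha^{-1}$, $q_2 = m$ forces $\beta = \gamma$, and $q_3 = m$ forces $\beta = \alpha$; so in each case two of $\alpha, \beta, \gamma$ are determined by the third, and substituting into the remaining count collapses it to $\mathrm{fix}(\rho^2)$, the number of fixed points of the square of a fixed-point-free permutation $\rho$ (for instance $q_2 = q_3 = m$ gives $\alpha = \beta = \gamma$ and leaves $q_1 = \mathrm{fix}(\alpha^2)$). Here the parity hypothesis enters decisively: since $\rho$ is fixed-point-free, $\mathrm{fix}(\rho^2)$ equals twice the number of $2$-cycles of $\rho$ and is therefore even, while the points lying outside the $2$-cycles of $\rho$ form cycles of length at least $3$. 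Their number $m - \mathrm{fix}(\rho^2)$ is odd because $m$ is odd, and since it is a union of cycles of length at least $3$ it is either $0$ or at least $3$; being odd it is at least $3$. Thus the remaining count is at most $m-3$, giving $\sum_{j=1}^3 q_j \le 2m + (m-3) = 3m-3$ and contradicting $\sum_{j=1}^3 q_j \ge 3m-2$. The main difficulty is thus not any single calculation but recognizing the right reduction — that all six relevant cycle-counts are fixed-point numbers of words in $\alpha, \beta, \gamma$ — after which the facts that no permutation fixes $m-1$ points and that a fixed-point-free permutation on an odd set has a square fixing at most $m-3$ points complete the proof.
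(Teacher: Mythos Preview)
Your argument is correct, and the underlying idea --- translate the counts $t_i$ and $q_j$ into fixed-point numbers of permutations and exploit that no permutation fixes exactly $m-1$ points --- is the same as the paper's.  The execution, however, is organized differently and is in several respects cleaner.  You normalize along the star at $v_1$ (making $\sigma_{1,2},\sigma_{1,3},\sigma_{1,4}$ the identity), whereas the paper normalizes along the $4$-cycle $Q_1$; either works, but your choice makes the formulas for all three $q_j$ symmetric words in $\alpha,\beta,\gamma$.  More substantively, the paper first invokes the preceding lemma to rule out $q_1=q_2=q_3=m$, then argues $q_j\ne m-1$, reduces to the single configuration $(m,m,m-2)$, and finishes with a hands-on contradiction tracking two exceptional points $b_1,b_2$ through $\sigma_{2,4}$.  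Your route avoids the separate appeal to that lemma and replaces the ad~hoc endgame with the single general fact that a fixed-point-free permutation $\rho$ of an odd set has $\mathrm{fix}(\rho^2)\le m-3$; this simultaneously handles the case $k=3$ (since then the ``remaining'' $q_j$ would have to equal $m$, contradicting $m\le m-3$) and the case $k=2$.  The trade-off is that the paper's version makes the exact obstruction at $(m,m,m-2)$ very concrete, while yours gives a uniform bound and a shorter proof.
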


\begin{proof}
For the sake of contradiction suppose $\mathcal{H} = (L,H)$ is a full $m$-fold cover of $G = K_4$ such that $\sum_{i=1}^4 t_i = 0$ and $\sum_{j=1}^3 q_j \geq 3m-2$.  Note that Lemma~\ref{lem: odd} implies that $\sum_{j=1}^3 q_j \in \{3m-1, 3m-2\}$.  Using the notation above, suppose: the vertices of $Q_1$ in cyclic order are $v_1, v_2, v_3, v_4$, the vertices of $Q_2$ in cyclic order are $v_1, v_2, v_4, v_3$, and the vertices of $Q_3$ in cyclic order are $v_1, v_3, v_2, v_4$.

First, we claim that $q_j \neq m-1$ for each $j \in [3]$.  To see why suppose that $q_1 = m-1$.  Then, we may assume without loss of generality that $\sigma_{1,2}(i) = i$, $\sigma_{2,3}(i) = i$, $\sigma_{3,4}(i) = i$, and $\sigma^{-1}_{1,4}(i) = i$ for each $i \in [m-1]$.  Since $\mathcal{H}$ is full, this implies that $\sigma_{1,2}$, $\sigma_{2,3}$, $\sigma_{3,4}$, and $\sigma_{1,4}$ are all the identity permutation on $[m]$ which implies $q_1 =m$ contradicting $q_1 = m-1$.

Since $q_j \neq m-1$ for each $j \in [3]$, we may suppose that $\sum_{j=1}^3 q_j = 3m-2$, $q_1 = m$, $q_2=m$, and $q_3 = m-2$.  As in the proof of Lemma~\ref{lem: odd} assume that the vertices of $H$ are named so that $\sigma_{1,2}$, $\sigma_{2,3}$, $\sigma_{3,4}$, and $\sigma_{1,4}$ are all the identity permutation on $[m]$.  Since $\sum_{i=1}^4 t_i = 0$, we know that both $\sigma_{1,3}$ and $\sigma_{2,4}$ have no fixed points.  As in the proof of Lemma~\ref{lem: odd}, $q_2=m$ implies that $\sigma_{1,3}(x) = \sigma_{2,4}(x)$ for each $x \in [m]$.  Since $q_3 = m-2$, there are $b_1, b_2 \in [m]$ with $b_1 \neq b_2$ such that $\sigma_{2,4}(\sigma_{1,3}(b_i)) \neq b_i$ for each $i \in [2]$, and $\sigma_{2,4}(\sigma_{1,3}(t)) = t$ for each $t \in ([m] - \{b_1,b_2\})$.  So, we have that $\sigma_{2,4}(\sigma_{2,4}(b_i)) \neq b_i$ for each $i \in [2]$, $\sigma_{2,4}(\sigma_{2,4}(t)) = t$ for each $t \in ([m] - \{b_1,b_2\})$, and $\sigma_{2,4}(x) \neq x$ for each $x \in [m]$.

Since $\sigma_{2,4}(\sigma_{2,4}(b_i)) \neq b_i$ for each $i \in [2]$ we know that we can't have: $\sigma_{2,4}(b_1) = b_2$ and $\sigma_{2,4}(b_2) = b_1$.  So, we may assume without loss of generality $\sigma_{2,4}(b_1) = s$ for some $s \in ([m] - \{b_1,b_2\})$.  We then have that $\sigma_{2,4}(s) \neq s$ and $\sigma_{2,4}(s) \neq b_1$.  So suppose $\sigma_{2,4}(s) = q$ for some $q \in ([m] - \{s, b_1\})$.  This however implies $\sigma_{2,4}(q) = \sigma_{2,4}(\sigma_{2,4}(s)) = s = \sigma_{2,4}(b_1)$ which contradicts the fact that $\sigma_{2,4}$ is a permutation.      
\end{proof}

\begin{lem} \label{lem: upperodd}
For each odd $m \geq 7$,
$$P^*_{DP}(K_4,m) \leq m^4 - 6m^3 + 15m^2 - 13m - 3.$$ 
\end{lem}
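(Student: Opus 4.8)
The plan is to bound $P_{DP}(G,\mathcal{H})$ uniformly over all full $m$-fold covers $\mathcal{H}$ of $G = K_4$, starting from identity~\eqref{identity} and feeding in the structural facts from Lemmas~\ref{lem: odd} and~\ref{lem: oddagain} together with the elementary bounds $q_i \leq m$, $m_i \geq 0$, and $z \leq \min\{t_1,t_2,t_3,t_4\}$ recorded just before Lemma~\ref{lem: upper}. Writing $S = \sum_{i=1}^4 t_i$, the polynomial part of~\eqref{identity} is $m^4 - 6m^3 + 15m^2 - 16m$, so it suffices to prove
$$(3-m)S + \sum_{i=1}^3 q_i - \sum_{i=1}^6 m_i + z \leq 3m - 3$$
for every full $m$-fold cover; adding this bound to the polynomial part then yields exactly the claimed value, and maximizing over all covers gives the lemma.

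I would split into two cases according to whether $S = 0$. When $S = 0$, Lemma~\ref{lem: oddagain} applies (this is where oddness and $m \geq 5$ are used) to give $\sum_{i=1}^3 q_i \leq 3m - 3$, while $z \leq \min_i t_i = 0$ forces $z = 0$ and $\sum_i m_i \geq 0$; the displayed inequality is then immediate. This case is precisely where the hypothesis that $m$ is odd is consumed and where the improved additive constant $-3$ originates.

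The more delicate case is $S \geq 1$. Here I would only use the crude bounds $\sum_{i=1}^3 q_i \leq 3m$ and $\sum_i m_i \geq 0$, reducing the target to $(3-m)S + z \leq -3$. The key observation is that $z \leq \min_i t_i \leq \tfrac14 S$, since a minimum of four nonnegative numbers is at most their average, whence
$$(3-m)S + z \leq (3-m)S + \tfrac14 S = \left(\tfrac{13}{4} - m\right) S \leq \tfrac{13}{4} - m,$$
where the last step uses $S \geq 1$ and $\tfrac{13}{4} - m < 0$. For $m \geq 7$ this gives $\tfrac{13}{4} - m \leq -\tfrac{15}{4} \leq -3$, closing the case.

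I expect the main obstacle to be recognizing that the positive contribution of $z$ can never overwhelm the strongly negative $(3-m)S$ term. The naive worry is that a cover with many copies of $K_4$ in $H$ (large $z$) might inflate $P_{DP}(G,\mathcal{H})$, but the chain $z \leq \min_i t_i \leq S/4$ shows that each such $K_4$ is ``paid for'' fourfold in the triangle count $S$, and for $m \geq 7$ the cost $(m-3)$ per triangle dwarfs the gain. The threshold $m \geq 7$, rather than the $m \geq 5$ of the auxiliary lemmas, is exactly what is needed to push $\tfrac{13}{4} - m$ below $-3$ for odd $m$.
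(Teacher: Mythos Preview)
Your proof is correct and follows essentially the same route as the paper: start from Identity~\eqref{identity}, split on whether $S=\sum t_i$ vanishes, invoke Lemma~\ref{lem: oddagain} when $S=0$, and when $S\geq 1$ absorb the $z$ term into the $(3-m)S$ term using $z\leq\min_i t_i$ together with $\sum q_i\leq 3m$. The only cosmetic difference is in the $S\geq 1$ algebra---you pass through $\min_i t_i\leq S/4$ to obtain $(13/4-m)S\leq -3$, whereas the paper uses $3-m\leq -4$ directly to reach $-3t_1-4t_2-4t_3-4t_4\leq -3$; both arrive at the same threshold $m\geq 7$.
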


\begin{proof}
Suppose $\mathcal{H} = (L,H)$ is a full $m$-fold cover of $G=K_4$ such that $P_{DP}(G, \mathcal{H}) = P_{DP}^*(G,m)$.   Using the notation established above, we have from Identity~(\ref{identity}):
$$ P_{DP}^*(G,m) = m^4 - 6m^3 + 15m^2 - 16m + (3-m)\sum_{i=1}^4 t_i + \sum_{i=1}^3q_i - \sum_{i=1}^6 m_i + z.$$
Notice that since $m \geq 7$, if $\sum_{i=1}^4 t_i > 0$,  $(3-m)\sum_{i=1}^4 t_i - \sum_{i=1}^6 m_i + z \leq (3-m)\sum_{i=1}^4 t_i + \min\{t_1,t_2,t_3,t_4 \} \leq -3t_1 - 4t_2 - 4t_3 - 4t_4 \leq -3.$  On the other hand, if $\sum_{i=1}^4 t_i = 0$, Lemma~\ref{lem: kite} implies $(3-m)\sum_{i=1}^4 t_i - \sum_{i=1}^6 m_i + z = 0$. 

With these two observations in mind, we notice that if $\sum_{i=1}^3 t_i = 0$, Lemma~\ref{lem: oddagain} implies $\sum_{i=1}^3 q_i \leq 3m-3$ which establishes the desired bound.  We also notice that if $\sum_{i=1}^4 t_i > 0$, $P^*_{DP}(G,\mathcal{H}) \leq  m^4 - 6m^3 + 15m^2 - 16m -3 + \sum_{i=1}^3q_i \leq  m^4 - 6m^3 + 15m^2 - 13m - 3$.
\end{proof}

We can now prove Theorem~\ref{thm: main} by proving that the bounds in Lemmas~\ref{lem: upper} and~\ref{lem: upperodd} are attainable.

\begin{customthm} {\ref{thm: main}}
Suppose $m \geq 2$.  Then,
\[P^*_{DP}(K_4,m) = \begin{cases} 
      m^4 - 6m^3 + 15m^2 - 13m & \text{if $m$ is even} \\
      m^4 - 6m^3 + 15m^2 - 13m-3 & \text{if $m$ is odd.} 
   \end{cases}
\]  
Consequently, there is no $N \in \N$ and polynomial $p(m)$ such that $P_{DP}^*(K_4,m) = p(m)$ whenever $m \geq N$.
\end{customthm}

\begin{proof}
Note that the result can be computationally verified when $m=2,3,4,5$ (see Appendix~\ref{code}).  So, suppose that $m$ is even and $m \geq 6$.  We will now construct a full $m$-fold cover, $\mathcal{H} = (L,H)$, of $G = K_4$.  For each $i \in [4]$, let $L(v_i) = \{(v_i,j) : j \in [m] \}$ and let $V(H) = \bigcup_{i=1}^4 L(v_i)$.  Finally, create edges in $H$ so that whenever $i, j \in [4]$ and $i < j$, $(v_i,t)(v_j,t+1) \in E(H)$ when $t$ is odd and $(v_i,t)(v_j,t-1) \in E(H)$ when $t$ is even.  Now, using the notation established above, it is easy to check that $\mathcal{H}$ is a full $m$-fold cover of $G$ such that $t_i = 0$ for each $i \in [4]$ and $q_j = m$ for each $j \in [3]$.  Lemma~\ref{lem: upper} then implies that $P^*_{DP}(G,m) = P_{DP}(G, \mathcal{H}) = m^4 - 6m^3 + 15m^2 - 13m.$ 

Now suppose that $m \geq 7$ and $m$ is odd.  To complete the proof, we must now show that the upper bound in Lemma~\ref{lem: upperodd} is attainable.  We will construct a full $m$-fold cover, $\mathcal{H} = (L,H)$, of $G = K_4$.  For each $i \in [4]$, let $L(v_i) = \{(v_i,j) : j \in [m] \}$ and let $V(H) = \bigcup_{i=1}^4 L(v_i)$.  Finally, using the notation above, create edges as follows.  First, create edges so that $\sigma_{1,2}$, $\sigma_{1,3}$, and $\sigma_{1,4}$ are the identity permutation on $[m]$.  Let $f$ be the permutation on $[m]$ given by $f(1)=2$, $f(2)=3$, $f(3)=1$, $f(x) = x+1$ whenever $x$ is even and $4 \leq x \leq m-1$, and $f(x) = x-1$ whenever $x$ is odd and $5 \leq x \leq m$.  Finally, create edges so that $\sigma_{2,3} = \sigma_{2,4} = f$ and $\sigma_{3,4} = f^{-1}$.  

Now, using the notation established above, it is easy to check that $\mathcal{H}$ is a full $m$-fold cover of $G$ such that $t_i = 0$ for each $i \in [4]$.  Also, using the notation above, if we suppose: the vertices of $Q_1$ in cyclic order are $v_1, v_2, v_3, v_4$, the vertices of $Q_2$ in cyclic order are $v_1, v_2, v_4, v_3$, and the vertices of $Q_3$ in cyclic order are $v_1, v_3, v_2, v_4$, then it is clear that $q_1 = q_3 = m$.  It is also easy to check that $q_2 = m-3$.   Finally, Identity~[\ref{identity}] and the fact that $\sum_{i=1}^4 t_i = 0$ implies $(3-m)\sum_{i=1}^4 t_i - \sum_{i=1}^6 m_i + z = 0$   which yields  $P_{DP}(G, \mathcal{H}) = m^4 - 6m^3 + 15m^2 - 13m - 3.$ 
\end{proof}

\section{Proof of Theorem~\ref{thm: generalize}} \label{more}

This section is organized as follows.  We begin by generalizing the notation and approach of Section~\ref{main} so that it applies to complete graphs with order larger than 4.  Then, we establish a lower and upper bound on $P_{DP}(G, \mathcal{H})$ when $G = K_n$ with $n \geq 4$ and $\mathcal{H}$ is an $m$-fold cover of $G$ with $m > 2^{t+1} + t + n - 6$ where $t = \binom{n}{2}$.  These bounds allow us to prove the bounds in Theorem~\ref{thm: generalize}.  Finally, we use the bounds in Theorem~\ref{thm: generalize} to show that when $\mathcal{H} = (L,H)$ is a full $m$-fold cover of $G$ satisfying $P_{DP}(G, \mathcal{H}) = P_{DP}^*(G,m)$ and $m > 2^{t+1} + t + n - 6$, $H$ is triangle-free.

We begin by generalizing the notation used in Section~\ref{main}.  This notation will be used for the remainder of the paper.  Suppose that $G = K_n$ with $n \geq 4$, $V(G) = \{v_1, \ldots, v_n \}$, $E(G) = \{e_1, \ldots, e_t \}$ where $t = \binom{n}{2}$.  Also, for some $m > 2^{t+1} + t + n - 6$ we always have that $\mathcal{H}= (L,H)$ is a full $m$-fold cover of $G$ such that $L(v) = \{(v,i): i \in [m] \}$ for each $v \in V(G)$.  Additionally, for $i, j \in [n]$ with $i < j$, we let $\sigma_{i,j}$ be the permutation of $[m]$ that maps each $q \in [m]$ to the second coordinate of the vertex in $L(v_j)$ that is adjacent to $(v_i,q)$ in $H$.  

Let $\mathcal{U} = \{ I \subseteq V(H) : |L(v) \cap I| = 1 \text{ for each } v \in V(G) \}$.  Clearly, $|\mathcal{U}| = m^n$.  Now, for $i \in [t]$, if $e_i=v_r v_s$, let $S_i$ be the set consisting of each $I \in \mathcal{U}$ with the property that $H[I]$ contains an edge in $E_H(L(v_r), L(v_s))$.   Also, for each $i \in [t]$ let $C_i = \mathcal{U} - S_i$.  Clearly,
$$P_{DP}(G, \mathcal{H}) = \left | \bigcap_{i=1}^t C_i \right |.$$
So, by the Inclusion-Exclusion Principle, we see that
$$P_{DP}(G, \mathcal{H}) = |\mathcal{U}| - \left | \bigcup_{i=1}^t S_i \right | =  m^n - \sum_{k=1}^t (-1)^{k-1} \left ( \sum_{1 \leq i_1 < \cdots < i_k \leq t} \left | \bigcap_{j=1}^k S_{i_j} \right| \right).$$ 
 
Now, suppose that $T_1, \ldots, T_a$ are the 3-cycles contained in $G$ (note that $a = \binom{n}{3}$).  For each $i \in [a]$, let $t_i$ be the number of $3$-cycles contained in the second coordinate of the subcover of $\mathcal{H}$ corresponding to $T_i$.  Next, suppose $Q_1, \ldots, Q_b$ are the 4-cycles contained in $G$ (note that $b = 3\binom{n}{4}$).  For each $i \in [b]$, let $q_i$ be the number of $4$-cycles contained in the second coordinate of the subcover of $\mathcal{H}$ corresponding to $Q_i$.  Suppose that $M_1, \ldots, M_{2b}$ are the copies of $K_4$ minus an edge contained in $G$.  For each $i \in [2b]$, let $m_i$ be the number of copies of $M_i$ contained in the second coordinate of the subcover of $\mathcal{H}$ corresponding to $M_i$.  Finally, suppose that $Z_1, \ldots, Z_{b/3}$ are the copies of $K_4$ contained in $G$.  For each $i \in [b/3]$, let $z_i$ be the number of copies of $Z_i$ contained in the second coordinate of the subcover of $\mathcal{H}$ corresponding to $Z_i$.  

We know from Section~\ref{main} that if for some $i \in [2b]$, $E(M_i)$ contains all the edges of $T_j$ and $T_r$ for some $j,r \in [a]$ with $j \neq r$, then $m_i \leq \min\{t_j , t_r\}$.  Also, if for some $i \in [b/3]$, $E(Z_i)$ contains all the edges of $T_j$, $T_q$, $T_r$, $T_s$ for some $j,q,r,s \in [a]$ with $j < q < r < s$, then $z_i \leq \min\{t_j , t_q, t_r, t_s\}$.  We are now ready to present some bounds and identities that will be essential in our proof of Theorem~\ref{thm: generalize}.

\begin{lem} \label{lem: formulas3}
Assuming the set-up established above, the following statements hold. \\
(i) For any $k \in [2]$ and $i_1, \ldots, i_k \in [t]$ satisfying $i_1 < \cdots < i_k$, $\left | \bigcap_{j=1}^k S_{i_j} \right|  = m^{n-k}$. \\  
(ii) If $e_{i_1}, \ldots, e_{i_3}$ are distinct edges that are not the edges of a $3$-cycle in $G$, $\left | \bigcap_{j=1}^3 S_{i_j} \right| = m^{n-3}$. \\
(iii) Suppose that $3 \leq k \leq 6$, and $i_1, \ldots, i_k \in [t]$ satisfy $i_1 < \cdots < i_k$.  Let $G'$ be the subgraph of $G$ with edge set $\{e_{i_1}, \ldots, e_{i_k} \}$ and vertex set consisting of only the vertices that are an endpoint of at least one element in $E(G')$.  Suppose $\mathcal{H}' = (L',H')$ is the subcover of $\mathcal{H}$ corresponding to $G'$.  Let $C(G')$ be the number of copies of $G'$ contained in $H'$.  If $G'$ is a 3-cycle in $G$, then $\left | \bigcap_{j=1}^3 S_{i_j} \right| = m^{n-3}C(G')$.  Furthermore, if $4 \leq k \leq 6$ and $|V(G')|=4$, then $\left | \bigcap_{j=1}^k S_{i_j} \right| = m^{n-4}C(G').$\\   
(iv) Suppose $G'$ is the spanning subgraph of $G$ such that $E(G')$ contains all the edges of $T_j$ for some $j \in [a]$ and one additional edge.  Suppose  $E(G') = \{e_{i_1}, \ldots, e_{i_4} \}$.  Then, $\left | \bigcap_{j=1}^4 S_{i_j} \right|  = t_j m^{n-4}$. \\
(v) Suppose that $4 \leq k \leq 6$, and $i_1, \ldots, i_k \in [6]$ satisfy $i_1 < \cdots < i_k$.  Let $G'$ be the subgraph of $G$ with edge set $\{e_{i_1}, \ldots, e_{i_k} \}$ and vertex set consisting of only the vertices that are an endpoint of at least one element in $E(G')$.  If $G'$ is not a 3-cycle plus an edge, $G' \neq Q_i$ for some $i \in [b]$, $G' \neq M_i$ for some $i \in [2b]$, and $G' \neq Z_i$ for some $i \in [b/3]$, then $\left | \bigcap_{j=1}^k S_{i_j} \right| \leq m^{n-4}.$\\ 
(vi) For $k \geq 7$, $\sum_{1 \leq i_1 < \cdots < i_k \leq t} \left | \bigcap_{j=1}^k S_{i_j} \right| \leq \binom{t}{k} m^{n-4}$.  
\end{lem}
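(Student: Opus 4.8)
The plan is to derive all six parts from a single counting principle. Fix $k$ edges $e_{i_1},\dots,e_{i_k}$, let $G'$ be the subgraph they span (with no isolated vertices), and write $p=|V(G')|$ and $c$ for its number of components. First I would observe that an element $I\in\bigcap_{j=1}^k S_{i_j}$ is nothing but a choice, for each $v\in V(G)$, of a second coordinate $b_v\in[m]$ such that for every edge $v_rv_s$ of $G'$ the matching condition $b_s=\sigma_{r,s}(b_r)$ holds, while the coordinates on the remaining $n-p$ vertices are completely free. Because $\mathcal H$ is full, each $\sigma_{r,s}$ is a permutation of $[m]$, so propagating $b$ along a spanning tree of each component determines that component from a single free root coordinate, and cycle-closing edges impose only further equalities. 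Writing $N(G')$ for the number of admissible assignments on $V(G')$, this gives the master relation
$$\left|\bigcap_{j=1}^k S_{i_j}\right| = N(G')\,m^{n-p}\le m^{c}\,m^{n-p}=m^{\,n-(p-c)},$$
with equality $N(G')=m^{c}$ exactly when $G'$ is a forest.

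From here the early parts are quick. For (i) and (ii) the graph $G'$ has at most three edges and is not a triangle, hence is a forest whose spanning forest has $p-c=k$ edges, so the count is exactly $m^{n-k}$. For (iii) I would reuse the bijection behind Lemma~\ref{lem: subgraph}: copies of $G'$ in $H'$ are in bijection with admissible assignments on $V(G')$, and each extends in $m^{n-p}$ ways, yielding $m^{n-p}C(G')$ with $p=3$ or $p=4$. For (iv) the effective graph may be a paw (the extra edge meets $T_j$) or a triangle with a disjoint edge; in the first case I would combine the $|V(G')|=4$ instance of (iii) with the bijection of Lemma~\ref{lem: plusedge}, which identifies $C(G')$ with $t_j$, and in the second I would count directly ($t_j$ admissible assignments on the triangle, $m$ on the separate edge, $m^{n-5}$ free). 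Both give $t_j m^{n-4}$.

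The heart of the argument, and the step I expect to be the main obstacle, is (v), which the master relation reduces to proving $p-c\ge 4$ for every eligible $G'$ not on the excluded list. I would first dispose of $p=4$: every four-vertex graph with $4\le k\le 6$ edges is the four-cycle, the paw, the diamond $K_4-e$, or $K_4$, which are precisely the excluded $Q_i$, ``$3$-cycle plus an edge,'' $M_i$, and $Z_i$. For $p\ge 5$ I would show that $p-c\le 3$, i.e.\ $\sum_{\text{components}}(|V_i|-1)\le 3$ with each $|V_i|\ge 2$, forces component sizes $(3,2)$ with $k=4$, namely a triangle together with a disjoint edge, which is again a ``$3$-cycle plus an edge'' and hence excluded. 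The delicate point is confirming that the hypothesis's exclusion list is exactly the complement of the configurations with $p-c\le 3$, so that no genuinely problematic subgraph escapes the bound $m^{n-4}$.

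Finally (vi) follows from the same relation: for $k\ge 7$ simplicity forces $p\ge 5$ (since $\binom{4}{2}=6$), and the $\sum(|V_i|-1)$ analysis shows that $p-c\le 3$ would cap the edge count at $\binom{3}{2}+\binom{2}{2}=4<7$; hence $p-c\ge 4$ and each term is at most $m^{n-4}$. Summing over the $\binom{t}{k}$ choices of the $k$ edges gives the stated bound $\binom{t}{k}m^{n-4}$.
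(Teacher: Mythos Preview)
Your proposal is correct and follows essentially the same approach as the paper. The paper cites (i), (ii), (vi) from~\cite{MT20}, proves (iii) via the bijection $\mathcal{C}(G')\times[m]^{n-|V(G')|}\to\bigcap S_{i_j}$, handles (iv) by the same case split (paw versus triangle plus disjoint edge), and dispatches (v) with the one-line remark that the spanning subgraph on edge set $E(G')$ has at most $n-4$ components; your ``master relation'' $\bigl|\bigcap S_{i_j}\bigr|=N(G')\,m^{n-p}\le m^{n-(p-c)}$ packages all of this into a single statement and then unwinds it, which is a cleaner and more self-contained presentation but not a different idea.

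Two very minor remarks. First, the clause ``with equality $N(G')=m^c$ exactly when $G'$ is a forest'' overstates things: being a forest is sufficient but not necessary (a cycle whose edge-permutation product is the identity also achieves equality). This is harmless, since you only invoke the forest direction in (i) and (ii). Second, in your argument for (vi) you write that $p-c\le 3$ caps the edge count at $\binom{3}{2}+\binom{2}{2}=4$; this bound is correct only after you have already imposed $p\ge 5$ (otherwise a single $4$-vertex component allows $6$ edges), so you should make explicit that the $p\ge 5$ deduction comes first.
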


\begin{proof}
Statements (i), (ii), and (vi) are proven in a more general setting in~\cite{MT20}.  So, we begin with a proof of Statement~(iii).  Let $\mathcal{C}(G')$ be the set consisting of the vertex set of each copy of $G'$ in $H'$.  We suppose $G'$ is a 3-cycle or $4 \leq k \leq 6$ and $|V(G')|=4$.  Note that $\bigcap_{j=1}^k S_{i_j}$ has an element if and only if $\mathcal{C}(G')$ has an element. So, both results are clear when $C(G') = 0$, and we suppose that $C(G') > 0$. 

Suppose $C = V(G) - V(G')$, and $c = |C|$ (note that $c = n-3$ when $G'$ is a 3-cycle, and $c = n-4$ when $|V(G')|=4$).  Name the elements of $C$ so that $C = \{c_1, \ldots, c_c\}$.  Let $M : \mathcal{C}(G') \times [m]^c \rightarrow \bigcap_{j=1}^k S_{i_j}$ be the function given by $M(V,(x_1, \ldots, x_c)) = V \cup \{(c_i,x_i) : i \in [c] \}$.  It is easy to verify that $M$ is a bijection which yields $\left | \bigcap_{j=1}^k S_{i_j} \right| = m^{c}C(G')$.

We now turn our attention to Statement~(iv).  Suppose without loss of generality that $e_{i_4} \notin E(T_j)$.  We may suppose that $e_{i_4}$ doesn't share an endpoint with any element of $E(T_j)$; otherwise, the desired result follows from Lemma~\ref{lem: plusedge} and Statement~(iii).  Suppose $e_{i_4} = xy$.  Let $M : \bigcap_{j=1}^4 S_{i_j} \times [m] \rightarrow \bigcap_{j=1}^3 S_{i_j}$ be the function given by $M(V,i) = (V - (V \cap L(y))) \cup \{(y,i)\}$.  One can verify that $M$ is a bijection which yields $m\left | \bigcap_{j=1}^4 S_{i_j} \right| = \left | \bigcap_{j=1}^3 S_{i_j} \right|$.  Since Statement~(iii) implies $\left| \bigcap_{j=1}^3 S_{i_j} \right| = t_j m^{n-3}$, our proof of Statement~(iv) is complete.

For Statement~(v) notice that for any $G'$ satisfying the hypotheses, the spanning subgraph of $G$ with edge set $E(G')$ has at least $n-4$ components.  The result immediately follows (this idea is also used in~\cite{MT20} to prove Statement~(vi)).
\end{proof}

Lemma~\ref{lem: formulas3} along with the fact there are $(t-3)$ spanning subgraphs of $G$ that contain all the edges of $T_j$ and one additional edge, now yields the following.

\begin{align*}
&P_{DP}(G, \mathcal{H}) \\
&=  m^n + \sum_{k=1}^t (-1)^{k} \left ( \sum_{1 \leq i_1 < \cdots < i_k \leq t} \left | \bigcap_{j=1}^k S_{i_j} \right| \right) \\
&= m^n - tm^{n-1} + \binom{t}{2} m^{n-2} - \left(\binom{t}{3}-\binom{n}{3} \right)m^{n-3} - \left(\sum_{i=1}^a t_i \right)m^{n-3} \\
&+ \sum_{k=4}^6 (-1)^{k} \left ( \sum_{1 \leq i_1 < \cdots < i_k \leq t} \left | \bigcap_{j=1}^k S_{i_j} \right| \right) + \sum_{k=7}^t (-1)^{k} \left ( \sum_{1 \leq i_1 < \cdots < i_k \leq t} \left | \bigcap_{j=1}^k S_{i_j} \right| \right) \\
&\geq m^n - tm^{n-1} + \binom{t}{2} m^{n-2} - \left(\binom{t}{3}-\binom{n}{3} \right)m^{n-3} - \left(\sum_{i=1}^a t_i \right)m^{n-3} + \left(\sum_{i=1}^b q_i \right)m^{n-4}\\
&+ (t-3)\left(\sum_{i=1}^a t_i \right)m^{n-4} - \left(\sum_{i=1}^{2b} m_i \right)m^{n-4} + \left(\sum_{i=1}^{b/3} z_i \right)m^{n-4}  \\
&- \left(\binom{t}{4}-b - a(t-3) \right)m^{n-4} - \left(\binom{t}{5}- 2b \right)m^{n-4} - \left(\binom{t}{6}-b/3 \right)m^{n-4}  - \sum_{k=7}^t \binom{t}{k} m^{n-4} \\
&\geq m^n - tm^{n-1} + \binom{t}{2} m^{n-2} - \left(\binom{t}{3}-\binom{n}{3} \right)m^{n-3} + (t-3-m) \left(\sum_{i=1}^a t_i \right)m^{n-4}  \\
&+ \left(\sum_{i=1}^b q_i \right)m^{n-4} - \left(\sum_{i=1}^{2b} m_i \right)m^{n-4} + \left(\sum_{i=1}^{b/3} z_i \right)m^{n-4} \\ 
&+ (b + a(t-3) + 2b + b/3)m^{n-4}- 2^{t}m^{n-4}.  \numberthis \label{lower}
\end{align*}

Similarly, using the fact that $m > 2^{t+1} + t + n - 6$ which means $n+t-m - 6< 0$, $q_i \leq m$ for each $i \in [b]$, and for each $i \in [a]$, $T_i$ is a subgraph of exactly $(n-3)$ of the graphs: $Z_1, \ldots, Z_{b/3}$, we obtain:  

\begin{align*}
&P_{DP}(G, \mathcal{H}) \\
&= m^n - tm^{n-1} + \binom{t}{2} m^{n-2} - \left(\binom{t}{3}-\binom{n}{3} \right)m^{n-3} - \left(\sum_{i=1}^a t_i \right)m^{n-3} \\
&+ \sum_{k=4}^6 (-1)^{k} \left ( \sum_{1 \leq i_1 < \cdots < i_k \leq t} \left | \bigcap_{j=1}^k S_{i_j} \right| \right) + \sum_{k=7}^t (-1)^{k} \left ( \sum_{1 \leq i_1 < \cdots < i_k \leq t} \left | \bigcap_{j=1}^k S_{i_j} \right| \right) \\
&\leq m^n - tm^{n-1} + \binom{t}{2} m^{n-2} - \left(\binom{t}{3}-\binom{n}{3} \right)m^{n-3} - \left(\sum_{i=1}^a t_i \right)m^{n-3} + \left(\sum_{i=1}^b q_i \right)m^{n-4}\\
&+ (t-3)\left(\sum_{i=1}^a t_i \right)m^{n-4} - \left(\sum_{i=1}^{2b} m_i \right)m^{n-4} + \left(\sum_{i=1}^{b/3} z_i \right)m^{n-4} + \sum_{k=4}^t \binom{t}{k} m^{n-4} \\
&\leq m^n - tm^{n-1} + \binom{t}{2} m^{n-2} - \left(\binom{t}{3}-\binom{n}{3} \right)m^{n-3} + (t-3-m) \left(\sum_{i=1}^a t_i \right)m^{n-4} \\
&+ bm^{n-3} + (n-3)\left(\sum_{i=1}^{a} t_i \right)m^{n-4} + 2^{t}m^{n-4} \numberthis \label{upper}\\
&\leq m^n - tm^{n-1} + \binom{t}{2} m^{n-2} - \left(\binom{t}{3}-\binom{n}{3} - 3 \binom{n}{4} \right)m^{n-3} +  2^{t}m^{n-4} .
\end{align*}

Notice that this establishes the upper bound in Theorem~\ref{thm: generalize}.  We will now construct covers and use bound~(\ref{lower}) to establish the lower bound in Theorem~\ref{thm: generalize}.

\begin{lem} \label{lem: lower}
Suppose $G = K_n$ with $n \geq 4$, and let $t = \binom{n}{2}$.  When $m > 2^{t+1} + t + n - 6$,
$$m^n - t m^{n-1} + \binom{t}{2} m^{n-2} - \left(\binom{t}{3} - \binom{n}{3} - 3\binom{n}{4} \right) m^{n-3} - 2^{t}m^{n-4} \leq P^*_{DP}(G,m).$$
\end{lem}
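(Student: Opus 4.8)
The plan is to prove this lower bound by exhibiting one well-chosen full $m$-fold cover $\mathcal{H}$ and combining $P^*_{DP}(G,m) \geq P_{DP}(G,\mathcal{H})$ with bound~(\ref{lower}). Since the term $(t-3-m)\left(\sum_{i=1}^a t_i\right)m^{n-4}$ in~(\ref{lower}) carries the large negative coefficient $t-3-m$ (note $t-3-m<0$ for $m$ above the threshold), even a single triangle in the second coordinate would cost on the order of $m^{n-3}$ and swamp the $2^t m^{n-4}$ slack. So I would insist that the cover be triangle-free, i.e.\ $t_i=0$ for every $i \in [a]$. By Lemma~\ref{lem: kite} and the accompanying bound $z_i \le \min\{t_j,t_q,t_r,t_s\}$, this forces $m_i=0$ for all $i\in[2b]$ and $z_i=0$ for all $i\in[b/3]$, so that~(\ref{lower}) collapses to
$$P_{DP}(G,\mathcal{H}) \geq A + \Big(\sum_{i=1}^b q_i\Big)m^{n-4} + \big(3b + \tfrac{b}{3} + a(t-3)\big)m^{n-4} - 2^t m^{n-4},$$
where $A = m^n - tm^{n-1} + \binom{t}{2} m^{n-2} - \big(\binom{t}{3}-\binom{n}{3}\big)m^{n-3}$ and I have used $b+2b = 3b$.

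Next I would specify the cover: set every permutation $\sigma_{i,j}$ (for $1 \le i < j \le n$) equal to a single fixed-point-free permutation $\tau$ of $[m]$ chosen so that $\tau^2$ fixes all but at most three points. Concretely, take $\tau$ to be a fixed-point-free involution when $m$ is even, and the product of one $3$-cycle with a fixed-point-free involution on the remaining $m-3$ points when $m$ is odd; in either case $\mathcal{H}$ is a full $m$-fold cover. The key structural observation is that, because every $\sigma_{i,j}$ equals the same $\tau$, the closing composition around any cycle is simply $\tau^{e}$, where $e$ is the signed count of its edges (each edge contributing $+1$ if traversed in increasing index order and $-1$ otherwise), and these powers commute. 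For a triangle $v_iv_jv_k$ with $i<j<k$ the composition is $\sigma_{i,k}^{-1}\sigma_{j,k}\sigma_{i,j} = \tau^{-1}\tau\tau = \tau$, which is fixed-point-free; hence $t_i=0$ for every triangle, as required. For a $4$-cycle the exponent $e$ is a sum of four terms $\pm 1$ and is therefore even, so the closing composition is a power of $\tau^2$ and thus fixes at least $m-3$ points; hence $q_i \ge m-3$ for each $i \in [b]$.

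With these counts in hand the computation closes quickly: $\sum_{i=1}^b q_i \ge bm - 3b$, so the displayed inequality yields
$$P_{DP}(G,\mathcal{H}) \ge A + (bm-3b)m^{n-4} + \big(3b+\tfrac{b}{3}+a(t-3)\big)m^{n-4} - 2^t m^{n-4} = A + bm^{n-3} + \big(\tfrac{b}{3}+a(t-3)\big)m^{n-4} - 2^t m^{n-4}.$$
Since $\tfrac{b}{3}+a(t-3)\ge 0$ and $A + bm^{n-3} = A + 3\binom{n}{4}m^{n-3} = f(m)$ (recall $b=3\binom{n}{4}$), I conclude $P^*_{DP}(G,m) \ge P_{DP}(G,\mathcal{H}) \ge f(m) - 2^t m^{n-4}$, which is the claim.

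The main obstacle I anticipate is the odd case. For even $m$ the involution construction gives $q_i=m$ for every $4$-cycle and the bound is immediate, but Lemma~\ref{lem: odd}-style reasoning already shows for $K_4$ that one cannot simultaneously achieve $t_i=0$ and $q_i=m$ for all $4$-cycles when $m$ is odd. The point I would emphasize is that the $2^t m^{n-4}$ slack in the target makes exact optimality unnecessary: it suffices that the total $4$-cycle deficiency $\sum_{i=1}^b (m-q_i)$ be bounded by the constant $\tfrac{b}{3}+a(t-3)+3b$, and the choice of $\tau$ with $\tau^2$ nearly the identity guarantees the cruder bound $\sum_i (m-q_i) \le 3b$. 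Verifying that this single $\tau$ keeps every triangle composition fixed-point-free while confining every $4$-cycle deficiency to at most $3$ is the technical heart of the argument.
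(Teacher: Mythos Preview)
Your proof is correct and follows essentially the same approach as the paper: the cover you construct (all $\sigma_{i,j}$ equal to a single fixed-point-free $\tau$, taken to be an involution for even $m$ and a $3$-cycle times an involution for odd $m$) is exactly the paper's construction, and you feed it into bound~(\ref{lower}) in the same way. The only difference is cosmetic: where the paper computes $q_i$ exactly for each of the three $4$-cycles on a given $4$-set (getting $m,m,m-3$ in the odd case, hence $\sum q_i = bm-b$), you use the cleaner parity observation that the closing exponent on any $4$-cycle is even to obtain the uniform bound $q_i \ge m-3$, and then absorb the resulting looser estimate $\sum q_i \ge bm-3b$ into the nonnegative slack term $\tfrac{b}{3}+a(t-3)$.
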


\begin{proof}
We will first prove the result when $m$ is even.  Construct a full $m$-fold cover, $\mathcal{H} = (L,H)$, of $G$ as follows.  For each $i \in [n]$, let $L(v_i) = \{(v_i,j) : j \in [m] \}$ and let $V(H) = \bigcup_{i=1}^n L(v_i)$.  Finally, create edges in $H$ so that whenever $i, j \in [t]$ and $i < j$, $(v_i,t)(v_j,t+1) \in E(H)$ when $t$ is odd and $(v_i,t)(v_j,t-1) \in E(H)$ when $t$ is even.  Now, using the notation established above, it is easy to check that $\mathcal{H}$ is a full $m$-fold cover of $G$ such that $t_i = 0$ for each $i \in [a]$ and $q_j = m$ for each $j \in [b]$.  The fact that $t_i = 0$ for each $i \in [a]$ also implies that $m_i = 0$ for each $i \in [2b]$ and $z_i = 0$ for each $i \in [b/3]$.  Our desired bound now immediately follows from bound~(\ref{lower}) and the fact that $P_{DP}(G,\mathcal{H}) \leq P_{DP}^*(G,m)$.

Now suppose that $m$ is odd. Construct a full $m$-fold cover, $\mathcal{H} = (L,H)$, of $G$ as follows.  For each $i \in [n]$, let $L(v_i) = \{(v_i,j) : j \in [m] \}$ and let $V(H) = \bigcup_{i=1}^n L(v_i)$.  Let $f$ be the permutation on $[m]$ given by $f(1)=2$, $f(2)=3$, $f(3)=1$, $f(x) = x+1$ whenever $x$ is even and $4 \leq x \leq m-1$, and $f(x) = x-1$ whenever $x$ is odd and $5 \leq x \leq m$.  Now, create edges so that whenever $i, j \in [t]$ and $i < j$, $\sigma_{i,j} = f$.  

Now, using the notation established above, we have that $t_i = 0$ for each $i \in [a]$ since $f^{-1}(f(f(x))) = f(x) \neq x$ for each $x \in [m]$.  As in the case when $m$ is even, this implies $m_i = 0$ for each $i \in [2b]$ and $z_i = 0$ for each $i \in [b/3]$.  

Now, suppose $v_i, v_j, v_k, v_l$ are four arbitrary vertices in $G$ such that $i < j < k < l$.  Additionally, suppose $Q_q$, $Q_r$, and $Q_s$ are the 4-cycles in $G$ with vertex set equal to $\{v_i, v_j, v_k, v_l\}$.  We will now compute $q_q$, $q_r$, and $q_s$.  Without loss of generality we may assume $i=1$, $j=2$, $k=3$, and $l=4$.  We may also suppose the vertices of $Q_q$ in cyclic order are $v_1, v_2, v_3, v_4$, the vertices of $Q_r$ in cyclic order are $v_1, v_2, v_4, v_3$, and the vertices of $Q_s$ in cyclic order are $v_1, v_3, v_2, v_4$.  Note that for each $x \in [m]$, 
$$ \sigma^{-1}_{1,4}(\sigma_{3,4}(\sigma_{2,3}(\sigma_{1,2}(x)))) = f^{-1}(f(f(f(x)))) = f(f(x)).$$
Since $f(f(x))=x$ for each $x \in [m]-[3]$ and $f(f(x)) \neq x$ for each $x \in [3]$, we have that $q_q = m-3$.  Similarly, since $f^{-1}(f^{-1}(f(f(x))))=x$ for each $x \in [m]$, $q_r = m$; additionally, since $f^{-1}(f(f^{-1}(f(x))))=x$ for each $x \in [m]$, $q_s = m$.  This means
$$\left(\sum_{i=1}^b q_i \right) m^{n-4} = (mb-b) m^{n-4} = bm^{n-3} - b m^{n-4}.$$
Our desired bound now immediately follows from bound~(\ref{lower}) and the fact that $P_{DP}(G,\mathcal{H}) \leq P_{DP}^*(G,m)$.
\end{proof}

We are now ready to complete the proof of Theorem~\ref{thm: generalize} which we restate.

\begin{customthm} {\ref{thm: generalize}}
Suppose $G = K_n$ with $n \geq 4$, and let $t = \binom{n}{2}$.  If $m > 2^{t+1} + t + n - 6$ and $\mathcal{H} = (L,H)$ is a full $m$-fold cover of $G$ satisfying $P_{DP}(G, \mathcal{H}) = P_{DP}^*(G,m)$, then $H$ is triangle-free.  Moreover, when $m > 2^{t+1} + t + n - 6$,
$$f(m) - 2^{t}m^{n-4} \leq P^*_{DP}(G,m) \leq  f(m) + 2^{t}m^{n-4}$$
where 
$$f(m) = m^n - t m^{n-1} + \binom{t}{2} m^{n-2} - \left(\binom{t}{3} - \binom{n}{3} - 3\binom{n}{4} \right) m^{n-3}.$$
\end{customthm}

\begin{proof}
Suppose $\mathcal{H} = (L,H)$ is a full $m$-fold cover of $G$ satisfying $P_{DP}(G, \mathcal{H}) = P_{DP}^*(G,m)$ and $m > 2^{t+1} + t + n - 6$.  Since we have already proven both bounds on $P_{DP}^*(G,m)$ above, we need only show that $H$ is triangle-free.

For the sake of contradiction, suppose that $H$ is not triangle-free.  Using the notation above, this implies that $\sum_{i=1}^a t_i > 0$.  Let $S = \sum_{i=1}^a t_i$.  Then, by Bound~(\ref{upper}) and the fact that $m > 2^{t+1} + t + n - 6$, we obtain

\begin{align*}
P_{DP}^*(G,m) &= P_{DP}(G, \mathcal{H}) \\
&\leq m^n - tm^{n-1} + \binom{t}{2} m^{n-2} - \left(\binom{t}{3}-\binom{n}{3} - 3 \binom{n}{4} \right)m^{n-3} \\
&+(t+n-6 - m)Sm^{n-4} + 2^{t}m^{n-4} \\
&\leq m^n - tm^{n-1} + \binom{t}{2} m^{n-2} - \left(\binom{t}{3}-\binom{n}{3} - 3 \binom{n}{4} \right)m^{n-3} \\
&+(2^t+ t+n-6 - m)m^{n-4} \\
&< m^n - tm^{n-1} + \binom{t}{2} m^{n-2} - \left(\binom{t}{3}-\binom{n}{3} - 3 \binom{n}{4} \right)m^{n-3} - 2^{t}m^{n-4}.
\end{align*}

This however contradicts $f(m) - 2^t m^{n-4} \leq P_{DP}^*(G,m)$.
\end{proof}

{\bf Acknowledgment.}  The authors would like to thank Hemanshu Kaul for his guidance and encouragement.

\section{Appendix} \label{code}

The program in this Appendix is written in Python, and it is available at~\cite{S24}.  The program can be used to compute $P_{DP}^*(K_4,m)$ when $m=2,3,4,5$.  For our inputs for the program we assume $G = K_4$, $V(G) = \{v_1, v_2, v_3, v_4 \}$, and $E(G) = \{e_1, e_2, e_3, e_4, e_5, e_6 \}$ where $e_1 = v_1v_2$, $e_2 = v_1v_3$, $e_3 = v_1v_4$, $e_4 = v_2v_3$, $e_5 = v_2v_4$, and $e_6 = v_3v_4$.  By Corollary~12 in~\cite{DKM23}, $P_{DP}^*(G,m)$ can be computed by considering all full $m$-fold covers for $G$ such that $\sigma_{1,2}$, $\sigma_{1,3}$, and $\sigma_{1,4}$ are all the identity permutation. 

\small{
\begin{verbatim}
#################
#
#   GUIDE
# 1.) Ask $|V(G)|$ and $|E(G)|$ from the user.
# 2.) Ask the user for the incidence matrix of $G$.
# 3.) Asks the user for the value of $m$.
# 4.) For each edge, $e = v_i v_j$ with $i < j$ the user indicates with a $0$ 
#     that $\sigma_{i,j}$ will be fixed as the identity permutation for all 
#     full $m$-fold covers that the program considers, or the user 
#     indicates with a $1$ that $\sigma_{i,j}$ is not fixed. 
#
#   PROCESS
#     The program counts the number of proper $\mathcal{H}$-colorings of $G$ 
#     for each full $m$-fold cover, $\mathcal{H}$ of $G$, satisfying: 
#     for any edge $v_i v_j$ with $i < j$ that received a 0 in the 4th step, 
#     $\mathcal{H}$ has $\sigma_{i,j}$ as the identity permutation.
#     Thus if $q$ is the number of 1's entered by the user,
#     the program considers $(m!)^q$ full $m$-fold covers of $G$.
#
#   OUTPUT
#     The first output will be the maximum number of proper colorings 
#     over all $m$-fold covers that the program considers.
#
#     The second output will be the minimum number of proper colorings 
#     over all $m$-fold covers that the program considers.
# 
# To verify Theorem 3 for $m = 2,3,4,5$ use the following inputs:
# 1.) Enter the number of vertices: 4
#     Enter the number of edges: 6
#
# 2.) Enter the incidence matrix(one row at a time and space-separated values)
#           E1 E2 E3 E4 E5 E6
#       V1: 1 1 1 0 0 0
#       V2: 1 0 0 1 1 0
#       V3: 0 1 0 1 0 1
#       V4: 0 0 1 0 1 1
#
# 3.) Enter the fold number: m
#
# 4.) Enter the edges that will correspond to the identity permutation in the cover 
#     ('0' for identity, '1' otherwise):
#       E1: 0
#       E2: 0
#       E3: 0
#       E4: 1
#       E5: 1
#       E6: 1
#
#   OUTPUTS FOR EACH VALUE OF $m$ 
#       When $m = 2$:
#       2
#       0
#
#       When $m = 3$:
#       12
#       0
#
#       When $m = 4$:
#       60
#       24
#
#       When $m = 5$:
#       182
#       120
#
#################

import math as math
import itertools as iter
from tqdm import tqdm

#
# Creation of the incidence matrix of $G$. Returns a 2D Matrix.
# When $G$ has $l$ vertices and $w$ edges, the user specifies $l$ and $w$
# and then the incidence matrix.
# 
# 
def incidenceM_Creation(l, w):
    # Initialize the incidence matrix with zeros
    matrix = [[0] * w] * l

    print("Enter the incidence matrix "
          "(one row at a time and space-separated values):")
    for cols in range(w):
        if (cols == 0):
            print(f"    E{cols + 1}", end=" ")
        else:
            print(f"E{cols + 1}", end=" ")
    for i in range(l):
        print()
        row = input(f"V{i + 1}: ").strip().split()
        if len(row) != w:
            print("Error: Number of elements in the row does "
                  "not match the number of columns.\n")
            incidenceM_Creation(l, w)
            return None
        matrix[i] = [int(val) for val in row]
    print()   
    return matrix

# 
# Returns a 1D Matrix
# 
# For each edge, $e = v_i v_j$ with $i < j$ the user indicates with a $0$ 
#     that $\sigma_{i,j}$ will be fixed as the identity permutation for all 
#     full $m$-fold covers that the program considers, or the user 
#     indicates with a $1$ that $\sigma_{i,j}$ is not fixed.
# 
def edgeID_Creation(w):
    matrix = []
    print("\nEnter the edges that will correspond to the identity permutation "
          "in the cover('0' for identity, '1' otherwise): ")
    for i in range(w):
        matrix.append(int(input(f"Edge {i + 1}: ")))
        print()
    return matrix

#
# Generates every possible coloring based on the second coordinates of 
# the elements of $L(v)$ for each $v \in V(G)$.
#
def generate_colorings(n, length):
    return [list(perm) for perm in iter.product(range(1, n + 1), 
                                                repeat=length)]

#
# Determines whether a coloring is proper.
#
def coloring_function(m, v, e, incidence_matrix, p_edges, colorings):
    c = 0
    for i in range(m**v):
        z = 0
        for j in range(e):
            a = -1
            b = -1
            for k in range(v):
                if (incidence_matrix[k][j] == 1):
                    a = b
                    b = k
            if (p_edges[j][colorings[i][a] - 1] != colorings[i][b]):
                z += 1
        if (z == e):
            c += 1
    return c

##### DRIVER PROGRAM ######

# Ask $|V(G)|$ and $|E(G)|$ from the user.
v = int(input("Enter the number of vertices: "))
e = int(input("Enter the number of edges: "))

# Ask the user for the incidence matrix of $G$.
incidence_matrix = incidenceM_Creation(v, e)

# Ask the user for the value of $m$.
m = int(input("\nEnter the fold number: "))


# Generates every possible coloring based on the second coordinates of 
# the elements of $L(v)$ for each $v \in V(G)$.
colorings = generate_colorings(m, v)

# D is the maximum number of proper colorings 
# over all $m$-fold covers that the program considers.

# d is the minimum number of proper colorings 
# over all $m$-fold covers that the program considers.
D = 0
d = (m**v)

# Generates all permutations of $[m]$.
permutations = list(iter.permutations(range(1, m + 1)))


# Enter the edges that will correspond to the identity permutation 
# in the cover ('0' for identity, '1' otherwise):
edges_id = edgeID_Creation(e)

# Determine the number of 1's entered by the user.
q = 0 
for i in range(e):
    q += edges_id[i]

# Generates all full $m$-fold covers that the program will consider.
covers = generate_colorings(math.factorial(m), q)

#
# Counts the number of proper $\mathcal{H}$-colorings of $G$ 
# for each full $m$-fold cover, $\mathcal{H}$ of $G$, satisfying: 
# for any edge $v_i v_j$ with $i < j$ that received a 0, 
# $\mathcal{H}$ has $\sigma_{i,j}$ as the identity permutation.
# Thus if $q$ is the number of 1's entered by the user,
# the program considers $(m!)^q$ full $m$-fold covers of $G$. 
#
p = [[0] * m] * e
for i in tqdm (range(math.factorial(m)**q), desc="Calculating..."):
    for j in range(e):
        count = 0
        if edges_id[j]== 1:
            p[j] = permutations[covers[i][count] - 1]
            count += 1
        else:
            p[j] = permutations[0]
    c = coloring_function(m, v, e, incidence_matrix, p, colorings)
    if c > D:
        D = c
    if c < d:
        d = c

# OUTPUTS
print(f'Max: {D}')
print(f'Min: {d}')
\end{verbatim}}

\end{document}